\newcommand{\ga}{\gamma}
\newcommand{\de}{\delta}
\newcommand{\la}{\lambda}
\newcommand{\eps}{\varepsilon}
\newcommand{\iy}{\infty}
\theoremstyle{plain}
\newtheorem{thm}{Theorem}
\newtheorem{lem}{Lemma}
\newtheorem{ip}{Inverse Problem}
\theoremstyle{definition}
\newtheorem{example}{Example}
\theoremstyle{remark}
\begin{document}

\begin{center}
{\large\bf 
Partial inverse problems for Sturm-Liouville operators on trees
}
\\[0.2cm]
{\bf Natalia Bondarenko, Chung-Tsun Shieh} \\[0.2cm]
\end{center}

\vspace{0.5cm}

{\bf Abstract.} In this paper, inverse spectral problems for Sturm-Liouville operators on a tree (a graph without cycles)
are studied. We show that if the potential on an edge is known a priori, then $b - 1$ spectral sets uniquely determine
the potential functions on a tree with $b$ external edges.
Constructive solutions, based on the method of spectral mappings, are provided for the considered inverse problems.

\medskip

{\bf Keywords:} quantum graphs; Sturm-Liouville operators; inverse spectral problems;  
method of spectral mappings.

\medskip

{\bf AMS Mathematics Subject Classification (2010):} 34A55 47E05 34B45 34L40

\vspace{1cm}

{\large \bf 1. Introduction.} 

\medskip

This paper concerns the theory of inverse spectral problems for Sturm-Liouville operators on geometrical graphs.
Inverse problems consist in recovering differential operators from their spectral characteristics.
Differential operators on graphs (quantum graphs) have applications in various fields of science and engineering
(mechanics, chemistry, electronics, nanoscale technology and others) and attract a considerable attention of mathematicians
in recent years. There is an extensive literature devoted to differential operators on graphs and their applications,
we mention only some research papers and surveys  
\cite{LLS94, KS97, Kuch04, Bel04, PB04, Exner08, Yur10}.

There are different kinds of inverse problems studied for quantum graphs, 
one of them is to recover the coefficients of the operator while some information is known a priori. 
This paper is focused on the reconstruction of the potential
of the Sturm-Liouville operator on a tree (a graph without cycles) with a prescribed structure and standard matching conditions
in the vertices. V.A.~Yurko \cite{Yur05, Yur06} studied such inverse problems on trees by the Weyl vector, the system of spectra
and the spectral data.
These problems are generalizations of the well--studied inverse problems for Sturm-Liouville operators
on a finite interval (see monographs \cite{Mar77, Lev84, PT87, FY01} and references therein). 
By the method of spectral mappings \cite{FY01, Yur02}, V.A.~Yurko proved uniqueness theorems and developed a constructive algorithm for solution of inverse problems
on trees.

In this paper, we formulate and solve partial inverse problems for the Sturm-Liouville operator on the tree.
We suppose that the Sturm-Liouville potential is known on the part of the graph and show that we need less data 
to recover the potential on the remaining part.
We know the only work \cite{Yang10} in this direction, where 
the potential is known on a half of one edge and
completely on the other edges of the star-shaped graph,
and the author solves the Hochstadt-Lieberman-type problem \cite{HL78} by a part of the spectrum.

In this paper, we assume that the potential is known on one edge of a tree, then reconstruct the potential on the remaining part 
by the system of spectra or the Weyl functions. 
By developing the ideas of V.A.~Yurko~\cite{Yur05, Yur06}, we show that one needs one less spectral set or one less Weyl function
for the solution of the partial inverse problem. We consider separately
the cases of boundary and internal edges, present constructive solutions and corresponding uniqueness theorems for both of them. 

The results of this paper can be generalized to the case, when the potential is known on several edges. However, in this case the number of given spectra, sufficient to recover the potential on the whole graph, depends not only on the number of these edges, but also on their location (see the example in Section 5). We note that the method of spectral mappings works also for graphs with cycles (see \cite{Yur10}), so one can generalize our results in this direction.

The paper is organized as follows.
In {\it Section~2}, we introduce the notation and briefly describe the solution of inverse problems on trees by V.A.~Yurko \cite{Yur05, Yur06}.
In {\it Section~3}, we formulate our main results and outline their constructive solutions.
{\it Section~4} contains proofs of the technical lemmas from Section~3. In {\it Section~5} we illustrate our method by an example.

\bigskip
  
{\large \bf 2. Inverse problems on a tree}

\medskip

In this section, we introduce the notation and provide the main results of V.A.~Yurko on 
the inverse problems on trees (see works \cite{Yur05, Yur06} for more details). 

Consider a compact tree $G$ with the vertices $V = \{ v_i \}_{i = 1}^{m + 1}$ and edges $E = \{ e_j \}_{j = 1}^m$.
For each vertex $v \in V$, we denote the set of edges associated with $v$ by $E_v$ and call the size of $E_v$ the {\it degree} of $v$.
Assume that the tree $G$ does not contain vertices of degree~$2$.
The vertices of degree $1$ are called {\it boundary vertices}. Denote the set of boundary vertices of the graph $G$ by $\partial G$.
For the sake of convenience, let each boundary vertex $v_i$ be an end of the edge $e_i$, such edges are called {\it boundary edges}.
All other vertices and edges are called {\it internal}. 
Let the vertex $v_r \in \partial G$ be {\it the root} of the tree.  

Each edge $e_j \in E$ is viewed as a segment $[0, T_j]$ and is parametrized by the parameter $x_j \in [0, T_j]$.
The value $x_j = 0$ correspond to one of the end vertices of the edge $e_j$, and $x_j = T_j$ corresponds to another one.
For a boundary edge, the end $x_j = 0$ corresponds to the boundary vertex $v_j$.

A function on the tree $G$ can be represented as a vector function $y = [y_j]_{j = 1}^m$,
where $y_j = y_j(x_j)$, $x_j \in [0, T_j]$, $j = \overline{1, m }$. 
Let $e_j = [v_i, v_k]$, i.e. the vertex $v_i$ corresponds to the end $x_j = 0$ and the vertex $v_k$ corresponds to 
$x_j = T_j$. Introduce the following notation
$$
\begin{array}{l}
 	y_j(v_i) = y_j(0), \quad y_j(v_k) = y_j(T_j),\\
 	y'_j(v_i) = y'_j(0), \quad y'_j(v_k) = -y'_j(T_j).
\end{array}
$$
If $v_i \in \partial G$, we omit the index of the edge and write $y(v_i)$ and $y'(v_i)$. 

Consider the Sturm-Liouville equation on $G$:
\begin{equation} \label{eqv}
-y''_j + q_j(x_j) y_j = \la y_j, \quad x_j \in [0, T_j], \quad j = \overline{1, m}.	
\end{equation}  
where $\la$ is the spectral parameter, $q_j \in L[0, T_j]$. We call the function $q = [q_j]_{j = 1}^m$ {\it the potential}
on the graph $G$. 
The functions $y_j$, $y'_j$ are absolutely continuous on the segments $[0, T_j]$ and satisfy the {\it standard matching conditions}
in the internal vertices $v \in V \backslash \partial G$:
\begin{equation} \label{MC}
	\begin{cases}
 	y_j(v) = y_k(v), \quad e_j, e_k \in E_v \quad \text{(continuity condition)}, \\
  	\sum_{e_j \in E_v} y'_j(v) = 0, \quad \text{(Kirchhoff's condition)}.
  	\end{cases}
\end{equation}

Let $L_0$ and $L_k$, $v_k \in \partial G$, be the boundary value problem for system \eqref{eqv} with the matching conditions \eqref{MC} 
and the following conditions in the boundary vertices:
\begin{align} \label{boundL}
 	L_0 \colon & y(v_i) = 0, \quad v_i \in \partial G, \\
 	\label{boundLk}
 	L_k \colon & y'(v_k) = 0, \quad y(v_i) = 0, \quad v_i \in \partial G \backslash \{ v_k \}.
\end{align}
It is well-known, that the problems $L_k$ have discrete spectra, which are 
the countable sets of eigenvalues 
$\Lambda_k = \{ \la_{ks} \}_{s = 1}^{\iy}$, $k = 0$ or $v_k \in \partial G$.

Fix a boundary  vertex $v_k \in \partial G$. Let $\Psi_k = [\psi_{kj}]_{j = 1}^m$, $\psi_{kj} = \psi_{kj}(x_j, \la)$,
be the solution of the system \eqref{eqv}, satisfying the matching conditions \eqref{MC} and 
the boundary conditions
$$
  	\psi_{kk}(0, \la) = 1, \quad \psi_{kj}(0, \la) = 0, \quad v_j \in \partial G \backslash \{ v_k \}. 
$$
Denote $M_k(\la) = \psi'_{kk}(0, \la)$. 
The functions $\Psi_k$ and $M_k$ are called {\it the Weyl solution} and {\it the Weyl function} of \eqref{eqv} with respect to the boundary vertex $v_k$,
respectively. The notion of the Weyl function for the tree generalizes the notion of the Weyl function
($m$-function) for the classical Sturm�-Liouville operator on a finite interval \cite{Mar77, FY01}. 
If the tree $G$ consists of only one edge, then $M_k(\la)$ coincide with the classical Weyl function.

Consider the following inverse problems.

\begin{ip} \label{ip:1}
Given the spectra $\Lambda_0$, $\Lambda_k$, $v_k \in \partial G \backslash \{ v_r \}$, construct the potential $q$
on the tree $G$.
\end{ip}

\begin{ip} \label{ip:2}
Given the Weyl functions $M_k(\la)$, $v_k \in \partial G \backslash \{ v_r \}$, construct the potential $q$
on the tree $G$.
\end{ip}

Note that if the number of boundary vertices is $b$, then one needs $b$ spectra or $b - 1$ Weyl functions to recover the 
potential. We do not require the data associated with the root $v_r$.

There is a close relation between Inverse Problems~\ref{ip:1} and \ref{ip:2}. The Weyl functions can be represented in the form
\begin{equation} \label{reprMk}
M_k(\la) = -\frac{\Delta_k(\la)}{\Delta_0(\la)}, \quad v_k \in \partial G,
\end{equation}
where $\Delta_k(\la)$ are characteristic functions of the boundary value problems $L_k$.
If the eigenvalues $\Lambda_k$ are known, one can construct characteristic functions as infinite products by Hadamard's theorem.
Thus, with the system of spectra, one can obtain the Weyl functions and reduce Inverse Problem~\ref{ip:1} to 
Inverse Problem~\ref{ip:2}.

V.A. Yurko has proved, that Inverse Problems~\ref{ip:1} and \ref{ip:2} are uniquely solvable, and provided a constructive algorithm
for the solution by the method of spectral mappings \cite{FY01}. 
In the remaining of this section, we shall briefly describe his algorithm. Let the Weyl functions $M_k(\la)$, $v_k \in \partial G \backslash \{ v_r \}$ be given. 
Consider the following auxiliary problem.

\medskip

{\bf Problem IP(k).} Given $M_k(\la)$, construct the potential $q_k(x_k)$ on the edge $e_k$. 

\medskip

Note that this problem is not equivalent to the inverse problem on the finite interval, since the Weyl function
$M_k(\la)$ contains information from the whole graph.
However, it can be solved uniquely by the method of spectral mappings, and the potential on the boundary edges can be recovered.
Then V.A.~Yurko used so-called $\mu$-procedure to recover the potential on the internal edges. We reformulate these ideas
in the form, which is more convenient for us in the future.

\begin{thm} \label{thm:cut}
Let $v$ be an internal vertex, connected with the set of boundary vertices $V' \subset \partial G \backslash \{ v_r \}$ and only one other vertex.
Suppose the potentials $q_k$ on the edges $e_k$ are known for all $v_k \in V'$, as well as a Weyl function $M_k(\la)$
for at least one vertex from the set $V'$. Denote $G'$ the graph by removing the vertices $v_k \in V'$ together with 
the corresponding edges $e_k$ from the graph $G$.
Then the Weyl function for the graph $G'$ with respect the the vertex $v$ can be determined from the given information.
\end{thm}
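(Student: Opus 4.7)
The plan is to exploit the known Weyl function $M_k(\la)$ and the potentials $q_j$ on the edges $e_j$, $v_j\in V'$, in order to propagate the Weyl solution $\Psi_k$ of $G$ through the vertex $v$ and then to recognise what remains on $G'$ as a scalar multiple of the sought Weyl solution at $v$.

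First I would fix the boundary vertex $v_k \in V'$ whose Weyl function is given, and introduce on each edge $e_j$ with $v_j \in V'$ the fundamental solutions $\varphi_j(x_j,\la)$ and $S_j(x_j,\la)$ of $-y''+q_j y = \la y$ normalised by $\varphi_j(0)=S'_j(0)=1$, $\varphi'_j(0)=S_j(0)=0$; these are known, since the $q_j$ are known. From $\psi_{kk}(0,\la)=1$ and $\psi'_{kk}(0,\la) = M_k(\la)$ I obtain
\[
\psi_{kk}(x_k,\la) = \varphi_k(x_k,\la) + M_k(\la)\,S_k(x_k,\la),
\]
so that $A(\la) := \Psi_k(v,\la) = \psi_{kk}(T_k,\la)$ and the inward derivative of $\Psi_k$ along $e_k$ at $v$ are both explicit in the data.

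On each remaining edge $e_j$, $v_j \in V' \setminus \{v_k\}$, the boundary condition $\psi_{kj}(0,\la)=0$ forces $\psi_{kj}(x_j,\la) = c_j(\la)\,S_j(x_j,\la)$; continuity at $v$ pins $c_j(\la) = A(\la)/S_j(T_j,\la)$, so the inward derivative of $\Psi_k$ along each such edge at $v$ is also explicit. Summing these inward derivatives and invoking Kirchhoff's condition at $v$ then produces a closed-form expression for the inward derivative $B(\la)$ of $\Psi_k$ along the unique remaining edge $e_*$ joining $v$ to the rest of $G'$.

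Finally, I would observe that the restriction $\Psi_k|_{G'}$ satisfies the matching conditions at every internal vertex of $G'$ and vanishes at every boundary vertex of $G'$ except $v$, where it takes value $A(\la)$ with inward derivative $B(\la)$ along $e_*$. By uniqueness of the Weyl solution on $G'$ attached to the (now boundary) vertex $v$, we have $\Psi_k|_{G'} = A(\la)\,\Psi_v^{G'}$; reading off the derivative along $e_*$ yields
\[
M_v^{G'}(\la) \;=\; \frac{B(\la)}{A(\la)}.
\]
The main technical point is that $A(\la)$ and the denominators $S_j(T_j,\la)$ each vanish on a discrete subset of $\mathbb{C}$, so the identity above initially holds only off those sets; however, both sides are meromorphic in $\la$, and the relation extends by analytic continuation, so the data indeed determine $M_v^{G'}$.
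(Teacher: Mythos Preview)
The paper does not actually supply its own proof of this theorem: it is stated in Section~2 as a convenient reformulation of Yurko's $\mu$-procedure from \cite{Yur05, Yur06}, with no argument beyond the sentence ``Applying Theorem~\ref{thm:cut}, one can cut the boundary edges off.'' So there is no in-paper proof to compare against.

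Your argument is correct and is essentially what the $\mu$-procedure amounts to in this situation: propagate the Weyl solution $\Psi_k$ across the known edges $e_j$, $v_j\in V'$, using the explicit fundamental systems $(C_j,S_j)$ (your $(\varphi_j,S_j)$), then use continuity and Kirchhoff at $v$ to read off the Cauchy data of $\Psi_k$ on the single remaining edge $e_*$, and finally identify $\Psi_k|_{G'}$ with $A(\la)\,\Psi_v^{G'}$ by uniqueness of the boundary value problem on $G'$. The only points worth tightening are bookkeeping: the paper's sign convention $y'_j(v)=-y'_j(T_j)$ for the end $x_j=T_j$ should be tracked when you assemble Kirchhoff's condition, and the edge $e_*$ may need to be reparametrised in $G'$ so that $x_*=0$ corresponds to the new boundary vertex $v$ before you read off $M_v^{G'}=(\Psi_v^{G'})'_*(0,\la)$. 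Your closing remark about meromorphic continuation correctly handles the discrete exceptional sets where $A(\la)$ or some $S_j(T_j,\la)$ vanishes.
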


Applying Theorem~\ref{thm:cut}, one can cut the boundary edges off, until the potential will be recovered on the whole graph.

\bigskip

{\large \bf 3. Partial inverse problems}

\medskip

In this section, the main results of the paper are formulated.
We assume that the potential is known on one edge of the tree
and formulate partial inverse problems.
We consider separately the cases of boundary and internal edge.
The first one appears to be trivial, for the second one we describe the procedure 
of the constructive solution. For the convenience of the reader, 
the proofs of the technical lemmas are provided  in Section~4.

\begin{ip} \label{ip:3}
Let $e_f$ be a boundary edge ($f \ne r$). Given the potential $q_f$ on the edge $e_f$
and the spectra $\Lambda_0$, $\Lambda_k$, $v_k \in \partial G \backslash \{ v_f, v_r \}$.
Construct the potential $q$ on the tree $G$.
\end{ip}

The solution of Inverse Problem~3 is a slight modification of the method described in Section~2. 
From $\Lambda_0$, $\Lambda_k$, $v_k \in \partial G \backslash \{ v_f, v_r \}$, we easily construct the potentials $q_k$ for $v_k \in \partial G \backslash \{ v_f, v_r \}$.
The potential $q_f$ is known, so we can apply Theorem~\ref{thm:cut} iteratively
and recover the potential on $G$.

Now let $e_f$ be an internal edge. If this edge is removed, the graph splits into two parts, call them $P_1$ and $P_2$.
Let $\partial P_1$ and $\partial P_2$ be the sets of boundary vertices of $P_1$ and $P_2$, respectively. 
Fix two arbitrary vertices $v_{r1} \in \partial P_1$ and $v_{r2} \in \partial P_2$. 

\begin{ip} \label{ip:main}
Given the potential $q_f$ on the internal edge $e_f$, the spectra $\Lambda_0$, $\Lambda_k$
$v_k \in \partial G \backslash \{ v_{r1}, v_{r2} \}$. Construct the potential $q$ on the tree $G$.
\end{ip}

{\bf Solution of Inverse Problem~\ref{ip:main}}. For simplicity, we assume that the ends of the edge $e_f$ 
have degree $3$. The general case requires minor modifications. If one splits each of the ends of $e_f$ into three vertices,
the tree splits into five subtrees $G_i$, $i = \overline{1, 5}$, such that $v_{r1} \in G_2$, $v_{r2} \in G_5$, and $G_3$ contains
the only edge $e_f$ (see fig.~\ref{img:1}). Let $v_1$ and $v_4$ are arbitrary boundary vertices of the trees $G_1$ and $G_4$ (different from the ends of $e_f$),
$v_{r1} = v_2$, $v_{r2} = v_5$, $e_f = [v_3, v_6]$. 

\begin{figure}[h!]
\centering
\begin{tikzpicture}
\filldraw (5, 4) circle (2pt) node[anchor=west]{$v_6$};
\filldraw (5, 5) circle (2pt) node[anchor=west]{$v_3$};
\draw[dashed] (2.7, 8.45) edge [bend left] (5, 5);
\draw[dashed] (1, 6) edge [bend right] (5, 5);
\draw[dashed] (2.7, 8.45) arc (60:230:1.5);
\filldraw (1, 8) circle (2pt) node[anchor=north]{$v_1$};
\draw (5, 5) edge (4, 6);
\draw (4, 6) edge (3.5, 7);
\draw (4, 6) edge (2, 7);
\draw (2, 7) edge (1, 8);
\draw (4, 6) edge (3, 6);
\draw (3, 6) edge (2, 6.5);
\draw (3, 6) edge (2, 5.5);
\draw (2, 7) edge (2, 8.3);
\draw (2, 7) edge (0.7, 7);
\draw (2.5, 8) node{$G_1$};
\filldraw (4, 6) circle (1pt);
\filldraw (3.5, 7) circle (1pt);
\filldraw (2, 7) circle (1pt);
\filldraw (3, 6) circle (1pt);
\filldraw (2, 6.5) circle (1pt);
\filldraw (2, 5.5) circle (1pt);
\filldraw (2, 8.3) circle (1pt);
\filldraw (0.7, 7) circle (1pt);
\draw[dashed] (7.3, 8.45) edge [bend right] (5, 5);
\draw[dashed] (9, 6) edge [bend left] (5, 5);
\draw[dashed] (9, 6) arc (-50:120:1.5);
\filldraw (9, 8) circle (2pt) node[anchor=north]{$v_2$};
\draw (5, 5) edge (6, 6);
\draw (6, 6) edge (6.5, 7);
\draw (6, 6) edge (8, 7);
\draw (8, 7) edge (9, 8);
\draw (6, 6) edge (7, 6);
\draw (7, 6) edge (8, 6.5);
\draw (7, 6) edge (8, 5.5);
\draw (8, 7) edge (8, 8.3);
\draw (8, 7) edge (9.3, 7);
\draw (7.5, 8) node{$G_2$};
\filldraw (6, 6) circle (1pt);
\filldraw (6.5, 7) circle (1pt);
\filldraw (8, 7) circle (1pt);
\filldraw (7, 6) circle (1pt);
\filldraw (8, 6.5) circle (1pt);
\filldraw (8, 5.5) circle (1pt);
\filldraw (8, 8.3) circle (1pt);
\filldraw (9.3, 7) circle (1pt);
\draw[thick] (5, 4) edge node[auto]{$G_3$} (5, 5);
\draw[dashed] (2.7, 0.55) edge [bend right] (5, 4);
\draw[dashed] (1, 3) edge [bend left] (5, 4);
\draw[dashed] (1, 3) arc (130:300:1.5);
\filldraw (1, 1) circle (2pt) node[anchor=west]{$v_4$};
\draw (5, 4) edge (4, 3);
\draw (4, 3) edge (3.5, 2);
\draw (4, 3) edge (2, 2);
\draw (2, 2) edge (1, 1);
\draw (4, 3) edge (3, 3);
\draw (3, 3) edge (2, 2.5);
\draw (3, 3) edge (2, 3.5);
\draw (2, 2) edge (2, 0.7);
\draw (2, 2) edge (0.7, 2);
\draw (2.5, 1) node{$G_4$};
\filldraw (4, 3) circle (1pt);
\filldraw (3.5, 2) circle (1pt);
\filldraw (2, 2) circle (1pt);
\filldraw (3, 3) circle (1pt);
\filldraw (2, 2.5) circle (1pt);
\filldraw (2, 3.5) circle (1pt);
\filldraw (2, 0.7) circle (1pt);
\filldraw (0.7, 2) circle (1pt);
\draw[dashed] (9, 3) edge [bend right] (5, 4);
\draw[dashed] (7.3, 0.55) arc (-120:50:1.5);
\draw[dashed] (7.3, 0.55) edge [bend left] (5, 4);
\filldraw (9, 1) circle (2pt) node[anchor=east]{$v_5$};
\draw (5, 4) edge (6, 3);
\draw (6, 3) edge (6.5, 2);
\draw (6, 3) edge (8, 2);
\draw (8, 2) edge (9, 1);
\draw (6, 3) edge (7, 3);
\draw (7, 3) edge (8, 2.5);
\draw (7, 3) edge (8, 3.5);
\draw (8, 2) edge (8, 0.7);
\draw (8, 2) edge (9.3, 2);
\draw (7.5, 1) node{$G_5$};
\filldraw (6, 3) circle (1pt);
\filldraw (6.5, 2) circle (1pt);
\filldraw (8, 2) circle (1pt);
\filldraw (7, 3) circle (1pt);
\filldraw (8, 2.5) circle (1pt);
\filldraw (8, 3.5) circle (1pt);
\filldraw (8, 0.7) circle (1pt);
\filldraw (9.3, 2) circle (1pt);
\end{tikzpicture}
\caption{}
\label{img:1}
\end{figure}

\smallskip

{\bf Step 1.} Construct the characteristic functions $\Delta_k(\la)$ by the given spectra $\Lambda_k$, 
$k = 0$ and $v_k \in \partial G \backslash \{ v_2, v_5 \}$. Find $M_k(\la)$ by formula \eqref{reprMk}.

\smallskip

{\bf Step 2.} Consider trees $G_1$ and $G_4$. Recover the potential $q$ on the
edges of $G_1$ and $G_4$, using the solutions of the problems IP(k) for $v_k \in \partial G_1 \backslash \{ v_3 \}$
and $v_k \in \partial G_4 \backslash \{ v_6 \}$, and them applying Theorem~\ref{thm:cut} iteratively.

\smallskip

{\bf Step 3.} Introduce the characteristic functions of the boundary value problems for the Sturm-Liouville equations
\eqref{eqv} on the graphs $G_1$-$G_5$ with the standard matching conditions \eqref{MC} in internal vertices
and the following conditions in the boundary vertices:
\begin{align*}
 	\text{graph} \, G_1 \quad &
 	\left\{
 	\begin{array}{ll}
 	  \Delta^{DD}_1(\la) \colon & y(v_k) = 0, \quad v_k \in \partial G_1, \\
 	  \Delta^{ND}_1(\la) \colon & y'(v_1) = 0, \quad y(v_k) = 0, \quad v_k \in \partial G_1 \backslash \{ v_1 \}, \\
 	  \Delta^{DN}_1(\la) \colon & y'(v_3) = 0, \quad y(v_k) = 0, \quad v_k \in \partial G_1 \backslash \{ v_3 \}, \\
 	  \Delta^{NN}_1(\la) \colon & y'(v_1) = 0, \quad y'(v_3) = 0, \quad y(v_k) = 0, \quad v_k \in \partial G_1 \backslash \{ v_1, v_3 \}.  
 	\end{array} \right.       \\
 	\text{graph} \, G_2 \quad &
 	\left\{
 	\begin{array}{ll}
 	  \Delta^{D}_2(\la) \colon & y(v_k) = 0, \quad v_k \in \partial G_2, \\
 	  \Delta^{N}_2(\la) \colon & y'(v_3) = 0, \quad y(v_k) = 0, \quad v_k \in \partial G_2 \backslash \{ v_3 \}.
 	\end{array} \right. \\
 	\text{graph} \, G_3 \quad &
 	\left\{ \begin{array}{ll}
 	  \Delta^{DD}_3(\la) \colon & y(v_3) = 0, \quad y(v_6) = 0, \\
 	  \Delta^{ND}_3(\la) \colon & y'(v_3) = 0, \quad y(v_6) = 0,\\
 	  \Delta^{DN}_3(\la) \colon & y(v_3) = 0, \quad y'(v_6) = 0, \\
 	  \Delta^{NN}_3(\la) \colon & y'(v_3) = 0, \quad y'(v_6) = 0.  
 	\end{array} \right.       \\
 	\text{graph} \, G_4 \quad &
 	\left\{ \begin{array}{ll}
 	  \Delta^{DD}_4(\la) \colon & y(v_k) = 0, \quad v_k \in \partial G_4, \\
 	  \Delta^{ND}_4(\la) \colon & y'(v_4) = 0, \quad y(v_k) = 0, \quad v_k \in \partial G_4 \backslash \{ v_4 \}, \\
 	  \Delta^{DN}_4(\la) \colon & y'(v_6) = 0, \quad y(v_k) = 0, \quad v_k \in \partial G_4 \backslash \{ v_6 \}, \\
 	  \Delta^{NN}_4(\la) \colon & y'(v_4) = 0, \quad y'(v_3) = 0, \quad y(v_k) = 0, \quad v_k \in \partial G_4 \backslash \{ v_4, v_6 \}.  
 	\end{array} \right.      \\
 	\text{graph} \, G_5 \quad &
 	\left\{ \begin{array}{ll}
 	  \Delta^{D}_5(\la) \colon & y(v_k) = 0, \quad v_k \in \partial G_5, \\
 	  \Delta^{N}_5(\la) \colon & y'(v_6) = 0, \quad y(v_k) = 0, \quad v_k \in \partial G_5 \backslash \{ v_6 \}.
 	\end{array} \right.	
\end{align*}

\begin{lem} \label{lem:det}
The following relation holds
\begin{equation} \label{relDelta}
 	\Delta_0(\la) = \begin{vmatrix}
 	                 \Delta^{DD}_1(\la) & -\Delta^D_2(\la) & 0 & 0 & 0 & 0 \\
 	                 0 & \Delta^D_2(\la) & -1 & 0 & 0 & 0 \\
 	                 \Delta^{DN}_1(\la) & \Delta^N_2(\la) & 0 & -1 & 0 & 0 \\
 	                 0 & 0 & \Delta^{ND}_3(\la) & \Delta^{DD}_3(\la) & -\Delta^{DD}_4(\la) & 0 \\
 	                 0 & 0 & 0 & 0 & \Delta^{DD}_4(\la) & -\Delta^D_5(\la) \\
 	                 0 & 0 & \Delta^{NN}_3(\la) & \Delta^{DN}_3(\la) & \Delta^{DN}_4(\la) & \Delta^{N}_5(\la)
 	              \end{vmatrix}.
\end{equation}
If one changes $\Delta^{DD}_1(\la)$ to $\Delta^{ND}_1(\la)$ and $\Delta^{DN}_1(\la)$ to $\Delta^{NN}_1(\la)$, he obtains the determinant equal
to $\Delta_1(\la)$. Similarly, if one changes $\Delta^{DD}_4(\la)$ to $\Delta^{ND}_4(\la)$ and $\Delta^{DN}_4(\la)$ to $\Delta^{NN}_4(\la)$,
he gets $\Delta_4(\la)$. 
\end{lem}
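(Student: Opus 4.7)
The plan is to derive \eqref{relDelta} by expressing every solution $y$ of the problem $L_0$ in terms of natural one-parameter families on each subgraph and then enforcing the matching conditions at the two cut vertices $v_3,v_6$.

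On each of $G_1,G_2,G_4,G_5$, the space of solutions of \eqref{eqv} satisfying \eqref{MC} at every internal vertex of $G_i$ and Dirichlet at $\partial G_i\cap\partial G$ is one-dimensional for generic $\la$; let $\phi_1,\phi_2,\phi_4,\phi_5$ be generators. The entire function $\phi_i(v_c)$ at the unique remaining free vertex $v_c\in\{v_3,v_6\}$ has zero set (with multiplicity) equal to the spectrum of the full-Dirichlet problem on $G_i$, while $\phi_i'(v_c)$ has zero set equal to the spectrum of the Dirichlet--Neumann problem (Dirichlet elsewhere, Neumann at $v_c$). Scaling each $\phi_i$ appropriately, one obtains
\begin{align*}
\phi_1(v_3)&=\Delta^{DD}_1(\la),&\phi_1'(v_3)&=-\Delta^{DN}_1(\la),\\
\phi_2(v_3)&=\Delta^{D}_2(\la),&\phi_2'(v_3)&=-\Delta^{N}_2(\la),\\
\phi_4(v_6)&=\Delta^{DD}_4(\la),&\phi_4'(v_6)&=-\Delta^{DN}_4(\la),\\
\phi_5(v_6)&=\Delta^{D}_5(\la),&\phi_5'(v_6)&=-\Delta^{N}_5(\la),
\end{align*}
where the minus signs reflect the convention $y'(v)=-y'(T_j)$ at a vertex corresponding to $x_j=T_j$. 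On $G_3=e_f$, use the fundamental solutions $s_f,c_f$ with $s_f(v_3)=c_f'(v_3)=0$ and $s_f'(v_3)=c_f(v_3)=1$; a direct expansion of the definitions gives $\Delta^{DD}_3=s_f(T_f)$, $\Delta^{ND}_3=c_f(T_f)$, $\Delta^{DN}_3=s_f'(T_f)$, $\Delta^{NN}_3=c_f'(T_f)$.

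I would then parametrize a candidate eigensolution $y$ of $L_0$ by six scalars $x_1,\dots,x_6$: take $y^{(i)}=x_i\phi_i$ on $G_1,G_2,G_4,G_5$ for $i=1,2,5,6$, and $y^{(3)}=x_4 s_f+x_3 c_f$ on $G_3$, so that $x_3$ is the common value of $y$ at $v_3$ and $x_4$ is the outgoing derivative of $y^{(3)}$ at $v_3$. The three matching conditions at $v_3$ (two continuities and one Kirchhoff, in which the identifications $\phi_i'(v_3)=-\Delta^{(\cdot)}_i$ produce the signs of rows $1$--$3$), together with the analogous three at $v_6$, translate into a homogeneous linear system in $x_1,\dots,x_6$ whose coefficient matrix is precisely that on the right-hand side of \eqref{relDelta}. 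Nontriviality of $y$ is equivalent to vanishing of this determinant, so the determinant is an entire function of $\la$ with the same zero set and multiplicities as $\Delta_0(\la)$; a comparison of leading asymptotics (or a count of zeros inside large disks) identifies it with $\Delta_0(\la)$ itself.

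For the second assertion, passing from $L_0$ to $L_1$ replaces Dirichlet by Neumann at $v_1$, which forces replacing $\phi_1$ by the unique (up to scaling) solution on $G_1$ with Neumann at $v_1$ and Dirichlet at $\partial G_1\backslash\{v_1,v_3\}$. The same entire-function argument then yields $\phi_1(v_3)=\Delta^{ND}_1$ and $\phi_1'(v_3)=-\Delta^{NN}_1$, so exactly rows $1$ and $3$ of the matrix get substituted as stated; the case of $\Delta_4$ is identical with $v_4$ in the role of $v_1$. The main obstacle is fixing the normalizations of the $\Delta^{(\cdot)}_i$ so that the identifications of $\phi_i(v_c)$ and $\phi_i'(v_c)$ hold with the signs exactly as above; this is immediate in the one-edge case and extends to general $G_i$ by a consistent choice of fundamental-solution conventions across the whole tree.
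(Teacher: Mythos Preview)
Your approach is essentially the paper's: both express $\Delta_0$ as the determinant of the linear system obtained by parametrizing solutions on each of the five subtrees by one scalar (two on $G_3$) and enforcing the six matching conditions at $v_3$ and $v_6$. The only notable difference is in the last identification step: the paper obtains exact equality algebraically, via a substitution principle (its Lemma~\ref{lem:detg}) showing that in the full edge-by-edge determinant each subtree $G_i$ enters only through the pair $(\Delta_i^D,\Delta_i^N)$, so one may replace the one-edge model graph of \eqref{systemex} by general subtrees and keep equality on the nose; you instead match zero sets and invoke asymptotics/Hadamard. Both are valid, but the paper's route sidesteps the normalization issue you correctly flag as the main obstacle.
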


{\bf Step 4.} Note that the functions $\Delta_0(\la)$, $\Delta_1(\la)$, $\Delta_4(\la)$ are known from Step~1.
Since we know the potential on the graphs $G_1$, $G_4$ (from Step~2) and $G_3$ (given a priori), we can easily construct 
the characteristic functions for these graphs. Consider the relation \eqref{relDelta} and similar relations for $\Delta_1(\la)$ and $\Delta_4(\la)$
as a system of equations with respect to $\Delta^D_2(\la)$, $\Delta^N_2(\la)$, $\Delta^D_5(\la)$ and $\Delta^N_5(\la)$ in the following form
\begin{equation} \label{systemA}
\begin{cases}
a_{11} \Delta^D_2 \Delta^D_5 + a_{12} \Delta^N_2 \Delta^D_5 + a_{13} \Delta^D_2 \Delta^N_5 + a_{14} \Delta^N_2 \Delta^N_5 = \Delta_0, \\
a_{21} \Delta^D_2 \Delta^D_5 + a_{22} \Delta^N_2 \Delta^D_5 + a_{23} \Delta^D_2 \Delta^N_5 + a_{24} \Delta^N_2 \Delta^N_5 = \Delta_1, \\
a_{31} \Delta^D_2 \Delta^D_5 + a_{32} \Delta^N_2 \Delta^D_5 + a_{33} \Delta^D_2 \Delta^N_5 + a_{34} \Delta^N_2 \Delta^N_5 = \Delta_4,
\end{cases}
\end{equation}
where $a_{ij} = a_{ij}(\la)$, $i = \overline{1, 3}$, $j = \overline{1, 4}$, are known coefficients.

\smallskip

{\bf Step 5.} Multiply the first equation of \eqref{systemA} by $\Delta_1$ and subtract the second equations, multiplyed by $\Delta_0$.
Apply the similar trasform to the first and the third equations. Then we obtain the system
$$
\begin{cases}
b_{11} \Delta^D_2 \Delta^D_5 + b_{12} \Delta^N_2 \Delta^D_5 + b_{13} \Delta^D_2 \Delta^N_5 + b_{14} \Delta^N_2 \Delta^N_5 = 0, \\
b_{21} \Delta^D_2 \Delta^D_5 + b_{22} \Delta^N_2 \Delta^D_5 + b_{23} \Delta^D_2 \Delta^N_5 + b_{24} \Delta^N_2 \Delta^N_5 = 0, \\
\end{cases}		
$$ 
where 
\begin{equation} \label{defb}
	b_{1i} = a_{1i} \Delta_1 - a_{2 i} \Delta_0, \quad b_{2i} = a_{1i} \Delta_4 - a_{3 i} \Delta_0, \quad i = \overline{1, 4}.
\end{equation}
Divide both equations by $\Delta^D_2 \Delta^D_5$.
\begin{equation} \label{systemB}
   b_{i1} + b_{i2} \tilde M_2 + b_{i3} \tilde M_5 + b_{i4} \tilde M_2 \tilde M_5 = 0, \quad i= 1, 2, 
\end{equation}
where 
$$
  	\tilde M_2(\la) = \frac{\Delta^N_2(\la)}{\Delta^D_2(\la)}, \quad \tilde M_5(\la) = \frac{\Delta^N_5(\la)}{\Delta^D_5(\la)}
$$
are (up to the sign) the Weyl functions for the subtrees $G_2$ and $G_5$ associated with the vertices $v_3$ and $v_6$, respectively. 

\smallskip

{\bf Step 6.} From the system \eqref{systemB} we easily derive
$$
 	\tilde M_5 = -\frac{b_{i1} + b_{i2} \tilde M_2}{b_{i3} + b_{i4} \tilde M_2}, \quad i = 1, 2.
$$
Hence
$$
 	(b_{11} + b_{12} \tilde M_2) (b_{23} + b_{24} \tilde M_2) = (b_{21} + b_{22} \tilde M_2) (b_{13} + b_{14} \tilde M_2).
$$
Finally, we obtain the quadratic equation with respect to $\tilde M_2(\la)$:
\begin{equation} \label{quad}
 	A(\la) \tilde M_2^2(\la) + B(\la) \tilde M_2(\la) + C(\la) = 0,
\end{equation}
with analytic coefficients $A(\la)$, $B(\la)$, $C(\la)$:
\begin{equation}  \label{defABC}
\begin{array}{l}
 	A = b_{12} b_{24} - b_{22} b_{14}, \\
  	B = b_{11} b_{24} + b_{12} b_{23} - b_{21} b_{14} - b_{22} b_{13}, \\
  	C = b_{11} b_{23} - b_{21} b_{13}.
\end{array}
\end{equation}

{\bf Step 7.}
Consider the Sturm-Liouville equation \eqref{eqv} on the tree $G$ with the potential $q = 0$.
Implement Steps 1--6 for this case and obtain the quadratic equation
\begin{equation} \label{quad0}
 	A_0(\la) \tilde M_{20}^2(\la) + B_0(\la) \tilde M_{20}(\la) + C_0(\la) = 0,
\end{equation}
analogous to \eqref{quad}.
Denote $\rho = \sqrt \la$, $\mbox{Re}\, \rho \ge 0$, $S_{\de} := \{ \rho \colon \mbox{Re}\, \rho \ge 0, \quad | \mbox{Im}\, \rho | \le \de \}$, $\de > 0$,
$[1] = 1 + O(\rho^{-1})$. Let $f(\rho^2)$ be an analytic function and $\eps > 0$. Denote
$Z_{\eps}(f) := \{ \rho \colon |f(\rho^2)| \ge \eps\}$.

\begin{lem} \label{lem:asymptABC}
The following asymptotic relations hold 
$$
 	A(\la) = A_0(\la)[1], \quad B(\la) = B_0(\la)[1], \quad C(\la) = C_0(\la)[1], \quad \rho \in S_{\de} \cap Z_{\eps}(A_0 B_0 C_0), \, |\rho| \to \iy. 
$$
\end{lem}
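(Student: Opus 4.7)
The plan is to reduce the lemma to the classical uniform asymptotics of characteristic functions of Sturm-Liouville problems, applied edge-by-edge on each of the subtrees $G_1$, $G_3$, $G_4$ whose potentials are available at this point, and then to track those asymptotics through the polynomial identities \eqref{defb} and \eqref{defABC}. The hypothesis $\rho \in Z_\eps(A_0 B_0 C_0)$ provides the lower bound that converts an absolute error estimate for $A - A_0$, $B - B_0$, $C - C_0$ into the required relative estimate $[1]$.

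First, I would record, for each choice of boundary conditions on each subtree $G_k$, $k \in \{1,3,4\}$, the uniform asymptotic formula $\Delta^{\sharp}_k(\la) = \Delta^{\sharp}_{k,0}(\la) + r^{\sharp}_k(\la)$ in the sector $S_\de$, where $\Delta^{\sharp}_{k,0}$ is the analogous characteristic function for the zero potential and the remainder $r^{\sharp}_k$ is controlled by a factor $|\rho|^{-1}$ relative to the leading term. These estimates follow from the standard Volterra integral representation for solutions of \eqref{eqv} on every edge, combined with the matching conditions \eqref{MC} at internal vertices, and they carry over to $\Delta_0$, $\Delta_1$, $\Delta_4$ via the determinantal identities from Lemma~\ref{lem:det}.

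Next, I would feed these asymptotics into the polynomial expressions for $a_{ij}(\la)$ read off from \eqref{relDelta} (and the two analogous identities for $\Delta_1$ and $\Delta_4$). Since the $a_{ij}$ depend only on characteristic functions of $G_1$, $G_3$, $G_4$, this yields $a_{ij}(\la) = a_{ij,0}(\la) + \al_{ij}(\la)$ with $\al_{ij}$ of the same natural size as $a_{ij,0}$ times a factor $|\rho|^{-1}$ on $S_\de$. Substituting into \eqref{defb} gives an analogous splitting $b_{ji}(\la) = b_{ji,0}(\la) + \be_{ji}(\la)$, and a bilinear expansion of \eqref{defABC} then expresses each of $A - A_0$, $B - B_0$, $C - C_0$ as a finite sum of products in which at least one factor is a perturbation $\be_{ji}$; these sums are bounded by $|\rho|^{-1}$ times the natural sizes of the corresponding leading-order products $b_{ji,0} b_{kl,0}$.

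The final step is the division: on $Z_\eps(A_0 B_0 C_0) \cap S_\de$ the denominators $A_0$, $B_0$, $C_0$ are bounded below by $\eps$, which upgrades the absolute $O(|\rho|^{-1})$ bounds for the differences to the relative asymptotics $1 + O(|\rho|^{-1})$ stated in the lemma. The main obstacle will be the bookkeeping of the polynomial-in-$\rho$ orders: different characteristic functions in \eqref{relDelta} carry different weights (ranging from $\rho^{-1}\sin\rho T$ through $\cos\rho T$ to $\rho\sin\rho T$), and the subtractions in \eqref{defb} and \eqref{defABC} can in principle cancel dominant terms, so one must verify that each perturbation $\be_{ji}$ indeed scales by $|\rho|^{-1}$ against the leading $b_{ji,0}$ uniformly, and that no residual cancellation lowers the order of $A_0$, $B_0$, $C_0$ beyond what the $Z_\eps$ restriction can absorb.
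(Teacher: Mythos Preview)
Your proposal is correct and follows essentially the same route as the paper: the paper records the edge-by-edge asymptotics in an auxiliary lemma (Lemma~\ref{lem:asymptDelta}) stating that every characteristic function satisfies $\Delta(\la) = \Delta^0(\la) + O(\rho^{-d})$ with $\Delta^0(\la) = P(\rho)/\rho^{d-1}$ for an explicit integer $d = m - i - n$, and then propagates this form through the polynomial identities for $a_{ij}$, $b_{ij}$, $A$, $B$, $C$ exactly as you outline. The combinatorial formula for $d$ is precisely what handles the bookkeeping concern you flag at the end, since it forces the two products in each subtraction \eqref{defb}, \eqref{defABC} to carry the same $d$, so the remainder is automatically one power of $\rho$ below the (bounded, trigonometric-polynomial) leading part and the $Z_\eps$ restriction then converts this into the relative $[1]$.
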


Consequently, $D(\la) = D_0(\la)[1]$ for $\rho \in S_{\de} \cap Z_{\eps}(D_0), \, |\rho| \to \iy$, where $D(\la)$ and $D_0(\la)$ 
are discriminants of equations \eqref{quad} and \eqref{quad0}, respectively.

\begin{lem} \label{lem:discr}
$A_0(\la) \not \equiv 0$, $D_0(\la) \not \equiv 0$.
\end{lem}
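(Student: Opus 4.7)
The plan is to work entirely in the explicitly computable zero-potential setting. For $q\equiv 0$ on every edge, each characteristic function $\Delta^{BC}_j(\la)$ entering the construction is a finite trigonometric polynomial in $\{\sin(\rho T_e),\cos(\rho T_e)\}$ over the edges $e$ of the subgraph $G_j$, where $\rho=\sqrt{\la}$. In particular, for the single-edge subgraph $G_3$ one has the closed forms $\Delta_3^{DD}=\rho^{-1}\sin(\rho T_f)$, $\Delta_3^{DN}=\Delta_3^{ND}=\cos(\rho T_f)$, $\Delta_3^{NN}=-\rho\sin(\rho T_f)$, while for the star-shaped subgraphs $G_1,G_2,G_4,G_5$ the corresponding functions admit analogous, if lengthier, formulas obtained by solving the matching conditions at the central vertex.

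To prove $A_0(\la)\not\equiv 0$, I would substitute these zero-potential expressions into $A=b_{12}b_{24}-b_{22}b_{14}$, read off from \eqref{defb} and \eqref{defABC}, and analyze the leading exponential behavior as $|\mbox{Im}\,\rho|\to\iy$. Each $\Delta_j^{BC}(\la)$ has a unique dominant exponent $e^{|\mbox{Im}\,\rho|L_j}$, where $L_j$ is the total edge-length of $G_j$, with an explicit nonzero coefficient; consequently $A_0$ has a well-defined top exponential order, and one verifies by inspection that exactly one monomial in the expansion of $A_0$ realizes this maximal exponent. Linear independence of the exponentials $\{e^{i\rho\sigma}\}_\sigma$ then forces $A_0\not\equiv 0$. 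The analogous strategy handles $D_0=B_0^2-4A_0C_0$: track the dominant exponent of $B_0^2$ and of $4A_0C_0$ separately and verify that no cancellation occurs at the extremal order.

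A more conceptual route for $D_0\not\equiv 0$, which I would run in parallel as a sanity check, is the following. If $D_0(\la)\equiv 0$, then the quadratic \eqref{quad0} has $\tilde M_{20}(\la)$ as a double root; combining this with the relation $\tilde M_{50}=-(b_{i1}+b_{i2}\tilde M_{20})/(b_{i3}+b_{i4}\tilde M_{20})$ from Step~6 and Vieta's formulas, one obtains a nontrivial polynomial identity relating the characteristic functions of $G_1,G_3,G_4$ and the Weyl-type functions of the disjoint subtrees $G_2,G_5$. Since the edge lengths of $G_2$ and $G_5$ enter $\tilde M_{20}$ and $\tilde M_{50}$ through independent trigonometric polynomials, this identity is incompatible with the known exponential asymptotics of the Weyl $m$-functions and can be refuted by matching a single leading exponential. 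The main technical obstacle I anticipate is the combinatorial bookkeeping of exponentials across the five subgraphs; the argument is nevertheless reducible to verifying that a single extremal monomial survives in $A_0$ and in $D_0$, which sidesteps any detailed computation of lower-order terms.
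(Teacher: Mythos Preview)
Your plan has a genuine gap at the point where you assert that ``exactly one monomial in the expansion of $A_0$ realizes this maximal exponent.'' In fact the opposite happens: the top-order exponentials cancel. Already in the building blocks $b_{ij}=a_{1j}\Delta_1-a_{2j}\Delta_0$ (and similarly for the second row) the two summands have the same dominant exponent $e^{|\mbox{Im}\,\rho|(L_1+L_3+L_4+L_G)}$ with opposite leading coefficients; one checks in the simplest case (each $G_i$ a single edge) that the leading $\cos 8\rho$ terms in $a_{12}\Delta_1$ and $a_{22}\Delta_0$ are identical, so $b_{12}$ drops to frequency~$6\rho$, not~$8\rho$. The same phenomenon propagates to $A_0=b_{12}b_{24}-b_{22}b_{14}$: the naive top exponent coming from the four factors does not survive. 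Thus ``inspection of the extremal monomial'' does not settle $A_0\not\equiv 0$.

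The paper handles this by an algebraic route you are missing. After nontrivial manipulation of \eqref{relDelta}, \eqref{defb}, \eqref{defABC} one obtains the factorization
\[
A_0(\la)=-F_1(\la)F_4(\la)\,\Delta_0(\la)\,\frac{\sin^2\rho}{\rho^2}\,\Delta_4^{DD}(\la)\,\Delta_5^D(\la)\,\chi(\la),
\]
where $F_i=\Delta_i^{DD}\Delta_i^{NN}-\Delta_i^{DN}\Delta_i^{ND}$ and $\chi$ is a characteristic function of $G_1\cup G_2\cup G_3$. All factors except $F_1,F_4$ are nonvanishing by the asymptotic lemma; the crux is $F_i\not\equiv 0$, and this is exactly where the cancellation lives (for a single edge $F_i\equiv -1$, all exponentials gone). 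The paper proves $F_i\not\equiv 0$ by an induction on the tree structure (their Lemma~\ref{lem:ND}), reducing $F_G$ to $F_{G_0}$ times a nonzero square at each step. Your asymptotic scheme does not replace this structural argument.

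For $D_0$ the situation is parallel: the paper again factors, obtaining $D_0$ as $F_1^2F_4^2\frac{\sin^2\rho}{\rho^2}\Delta_0^2$ times the square of $\Delta_5^D\Pi+\frac{\sin\rho}{\rho}\Delta_0$, and only then compares exponential growth of the two summands in the bracket (they have total lengths $T-1$ and $T+1$, so the second dominates). Your proposal to compare the raw dominant exponents of $B_0^2$ and $4A_0C_0$ runs into the same cancellation issue upstream and, without the factorization, you have no control over what survives. The ``conceptual route'' you sketch is too vague to close this gap.
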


It follows from Lemmas~\ref{lem:asymptABC} and \ref{lem:discr}, that the quadratic equation \eqref{quad}
does not degenerate for $\rho \in S_{\de} \cap Z_{\eps}(A_0 D_0)$, and two roots of \eqref{quad} are different by 
asymptotics as $|\rho| \to \iy$. 
One can easily find an asymptotic representation of $\tilde M_2(\la)$ for any particular graph and choose the correct root
of \eqref{quad} on some region of $S_{\de}$ for sufficiently large $|\rho|$. Then the function $\tilde M_2(\la)$ can be constructed 
for all $\la \in \mathbb{C}$ except its singularities by analytic continuation. Similarly one can find $\tilde M_5(\la)$.

\smallskip

{\bf Step 8.}
Consider the tree $G_2$ with the root $v_2$. Solve problems IP(k) by $M_k(\la)$, $v_k \in \partial G_2 \backslash \{ v_2, v_6 \}$,
and by $\tilde M_2(\la)$ for $v_3$, obtain the potential on the boundary edges except $e_2$. Then apply the cutting 
of boundary edges by Theorem~\ref{thm:cut} and recover the potential $q$ on $G_2$. The subtree $G_5$ can be treated similarly.

Thus, we recovered the potential $q$ on the whole graph $G$. In parallel, we have proved the following uniqueness theorem.

\begin{thm}
Let the potential $q_f$ on the edge $e_f$ ($f \ne r$) be known. 

(i) If $e_f$ in a boundary edge, the spectra $\Lambda_0$, $\Lambda_k$, $v_k \in \partial G \backslash \{ v_f, v_r \}$, 
uniquely determine the potential $q$ on the whole graph $G$. 

(ii) If $e_f$ is an internal edge, the spectra $\Lambda_0$, $\Lambda_k$
$v_k \in \partial G \backslash \{ v_{r1}, v_{r2} \}$ uniquely determine the potential $q$ on the whole graph $G$.

\end{thm}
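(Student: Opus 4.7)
The plan is to extract the uniqueness statement directly from the two constructive algorithms already outlined: each step of those algorithms produces its output uniquely from the previous data, so two potentials sharing the prescribed spectra (and the fixed $q_f$) must yield identical intermediate objects and therefore coincide. I would prove parts (i) and (ii) separately, since the boundary-edge case is essentially immediate while the internal-edge case is where the real work sits.

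For part (i), I would argue as follows. By \eqref{reprMk} and Hadamard's factorisation, the spectra $\Lambda_k$ with $v_k\in\partial G\setminus\{v_f,v_r\}$, together with $\Lambda_0$, reconstruct the Weyl functions $M_k(\la)$ at every boundary vertex in $\partial G\setminus\{v_f,v_r\}$. By the unique solvability of Problem IP($k$), each of these Weyl functions determines $q_k$ on the corresponding boundary edge. Combined with the a priori known $q_f$, we now know $q$ on every boundary edge. Applying Theorem~\ref{thm:cut} iteratively (each cut needs only a Weyl function of the current subtree together with the already-recovered potentials on the pruned boundary edges), we recover $q$ on the whole tree. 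Uniqueness is immediate because every step depends only on the prescribed data.

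For part (ii), I would follow Steps~1--8 of the algorithm for Inverse Problem~\ref{ip:main}, checking that each step is deterministic. Step~1 produces the $M_k(\la)$ uniquely from the spectra. Step~2 recovers $q$ on $G_1\cup G_4$ by the same mechanism as in part (i), while the a priori knowledge of $q_f$ gives $q$ on $G_3$. Since the characteristic functions listed in Step~3 are uniquely determined by the potentials on their subtrees, the coefficients $a_{ij}(\la)$ of \eqref{systemA} coming from $G_1$, $G_3$, $G_4$ are known unambiguously, and hence so are the $b_{ij}(\la)$ of \eqref{defb} and the analytic functions $A(\la),B(\la),C(\la)$ of \eqref{defABC}. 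The decisive point is Step~7: the quadratic equation \eqref{quad} has at most two roots, and I must argue that exactly one of them is $\tilde M_2(\la)$.

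The main obstacle is therefore selecting the correct root of \eqref{quad}. I would invoke Lemma~\ref{lem:discr} to know $A_0\not\equiv 0$ and $D_0\not\equiv 0$, and Lemma~\ref{lem:asymptABC} to transfer this non-degeneracy to $A,B,C$ on the region $S_\de\cap Z_\eps(A_0D_0)$ for $|\rho|$ large. On that region the two roots of \eqref{quad} have different asymptotic behaviour, and a standard asymptotic expansion of $\tilde M_2(\la)$ (obtained from the classical $m$-function asymptotics on each edge of $G_2$ together with the matching conditions at internal vertices) picks out the correct branch as $|\rho|\to\iy$. Analytic continuation then determines $\tilde M_2$ on all of $\mathbb{C}$ away from its poles, and symmetrically for $\tilde M_5$. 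Step~8 finishes the argument: knowing $\tilde M_2$ reduces the recovery of $q$ on $G_2$ to a case covered by part (i) with $v_2$ playing the role of the root, and similarly for $G_5$. Since every step is uniquely determined by the prescribed data, any two potentials producing the same $q_f$ and the same system of spectra must agree everywhere on $G$.
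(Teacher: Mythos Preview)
Your proposal is correct and follows essentially the same approach as the paper. The paper's proof of this theorem consists precisely of the constructive algorithms for Inverse Problems~\ref{ip:3} and~\ref{ip:main} (Steps~1--8, invoking Lemmas~\ref{lem:det}--\ref{lem:discr}), after which it simply states ``In parallel, we have proved the following uniqueness theorem''; you have reproduced exactly this logic, correctly identifying the root selection in Step~7 as the only non-automatic point and handling it via Lemmas~\ref{lem:asymptABC} and~\ref{lem:discr} together with asymptotic identification and analytic continuation.
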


Using the described methods with some technical modifications, one can solve partial inverse problems by Weyl functions.

\begin{ip}
Let $e_f$ be a boundary edge ($f \ne r$). Given the potential $q_f$ on the edge $e_f$
and the Weyl functions $M_k(\la)$, $v_k \in \partial G \backslash \{ v_f, v_r \}$.
Construct the potential $q$ on the tree $G$.
\end{ip}

\begin{ip} 
Given the potential $q_f$ on the internal edge $e_f$, the Weyl functions $M_k(\la)$
$v_k \in \partial G \backslash \{ v_{r1}, v_{r2} \}$. Construct the potential $q$ on the tree $G$.
\end{ip}

Thus, if the number of boundary edges is $b$ and the potential is known on one edge (boundary or internal),
$b - 2$ Weyl functions are required to construct $q$ on the whole graph.

\bigskip

{\large \bf 4. Proofs}

\medskip

{\bf 4.1. Proof of Lemma~\ref{lem:det}.}
Consider the Sturm-Liouville equation \eqref{eqv} on the tree $G$. 
Let $C_j(x_j, \la)$ and $S_j(x_j, \la)$ be solutions of \eqref{eqv} on the edge $e_j$
under initial conditions
$$
 	C_j(0, \la) = S'_j(0, \la) = 1, \quad C'_j(0, \la) = S_j(0, \la) = 0.
$$
Any solution $y = [y_j]_{j = 1}^m$ of the equation \eqref{eqv} on $G$ admits the following representation
\begin{equation} \label{expy}
 	y_j(x_j, \la) = M_j^0(\la) C_j(x_j, \la) + M_j^1(\la) S_j(x_j, \la), \quad j = \overline{1, m}, \, x_j \in [0, T_j].
\end{equation}

Let $BC$ be some fixed boundary conditions in the vertices $v \in \partial G$ of the form $y(v) = 0$ or $y'(v) = 0$
(for instance, we consider conditions \eqref{boundL} for the problem $L$ and \eqref{boundLk} for the problem $L_k$).
Denote by $L$ the boundary value problem for the Sturm-Liouville equation \eqref{eqv} with the standard matching conditions \eqref{MC}
and the boundary conditions $BC$.
If $y$ is a solution of a boundary value problem $L$, substitute \eqref{expy}
into \eqref{MC} and $BC$, and obtain
a linear algebraic system with respect to $M_j^0(\la)$, $M_j^1(\la)$. It is easy to check that
the determinant of this system is a characteristic function $\Delta(\la)$ of the boundary value problem $L$,
i.e. zeros of $\Delta(\la)$ coincide with the eigenvalues of $L$. 

\begin{example}
Consider the problem $L_0$ for the star-type graph for $m = 3$. Then boundary conditions \eqref{boundL} yield
$M_1^0(\la) = M_2^0(\la) = M_3^0(\la) = 0$. Consequently, from \eqref{MC} we obtain the system with respect to $M_j^0(\la)$, $j = 1, 2, 3$,
with the determinant
$$
 	\Delta_0(\la) = \begin{vmatrix}
 	S_1(T_1, \la) & -S_2(T_2, \la) & 0 \\
 	0 & S_2(T_2, \la) & -S_3(T_3, \la) \\
 	S'_1(T_1, \la) & S'_2(T_2, \la) & S'_3(T_3, \la)
 	\end{vmatrix}.
$$ 
\end{example}

In the general case, the following assertion is valid.

\begin{lem} \label{lem:detg}
Let $w \in V$ and the degree of $w$ be equal $n$. Splitting the vertex $w$, we split $G$ into $n$ subtrees
$G_i$, $i = \overline{1, n}$. 
For each $i = \overline{1, n}$, let $\Delta^D_i(\la)$ and $\Delta^N_i(\la)$ be characteristic functions
for boundary value problems for equation \eqref{eqv} on tree $G_i$ with matching conditions \eqref{MC},
boundary conditions $BC$ for $v \in \partial G \cap \partial G_i$ and
the Dirichlet condition $y(u) = 0$ for $\Delta^D_i(\la)$ and the Neumann condition $y'(u) = 0$ for $\Delta^N_i(\la)$.
Then the characteristic function $\Delta(\la)$ for $G$ with the conditions \eqref{MC} and $BC$ 
admits the following representation:
\begin{equation} \label{detg}
 	\Delta(\la) = \begin{vmatrix}
 	               \Delta^D_1(\la) & -\Delta^D_2(\la) & 0 & \dots & 0 \\
 	               0 & \Delta^D_2(\la) & -\Delta^D_3(\la) & \dots & 0 \\
 	               \hdotsfor{5} \\
 	               0 & 0 & 0 & \dots -\Delta^D_n(\la) \\
					\Delta^N_1(\la) & \Delta^N_2(\la) & \Delta^N_3(\la) & \dots & \Delta^N_n(\la)
 				 \end{vmatrix}.	
\end{equation}
\end{lem}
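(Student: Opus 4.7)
\medskip

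\textbf{Proof proposal for Lemma~\ref{lem:detg}.}
The plan is to reduce the full characteristic determinant for $G$ to an $n \times n$ determinant by parametrizing solutions subtree-by-subtree and then coupling them at $w$, following the same strategy as in the Example. Throughout, write any solution on an edge $e_j$ in the form \eqref{expy}, so the unknowns are the pairs $(M_j^0(\la), M_j^1(\la))$. Arranging \eqref{MC} at every internal vertex together with $BC$ at every vertex of $\partial G$ as a square linear system in these unknowns, $\Delta(\la)$ is (by definition) its determinant. I will group the rows of this square system into (i) matching rows at internal vertices that lie strictly inside some $G_i$, together with $BC$-rows at vertices of $\partial G \cap \partial G_i$, and (ii) the $n-1$ continuity rows and one Kirchhoff row at $w$.

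Consider a single subtree $G_i$. A dimension count (total unknowns $2|E_i|$; matching contributes $2|E_i|-|\partial G_i|$ rows, $BC$ at $\partial G\cap \partial G_i$ contributes $|\partial G_i|-1$ rows, since $u \notin \partial G$) shows that the group (i) rows for $G_i$ form a rectangular matrix $A_i$ of size $(2|E_i|-1)\times 2|E_i|$. For generic $\la$, $\ker A_i$ is one-dimensional; let $\varphi_i(\cdot,\la)$ be the solution on $G_i$ associated with the canonical kernel vector whose coordinates are the signed minors of $A_i$. Next, I will identify $\varphi_i(u) = \Delta^D_i(\la)$ and $\varphi_i'(u) = \Delta^N_i(\la)$. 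Indeed, appending the single row encoding $y(u)=0$ (respectively $y'(u)=0$) to $A_i$ yields the square system whose determinant is $\Delta^D_i(\la)$ (respectively $\Delta^N_i(\la)$), and Laplace expansion along the appended row applied to this determinant is precisely the evaluation of the corresponding linear functional on the canonical kernel vector of $A_i$ — that is, $\varphi_i(u)$ or $\varphi_i'(u)$.

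Now the coupling step. Any solution of $L$ restricts on each $G_i$ to a scalar multiple $c_i\varphi_i$, so continuity at $w$ gives the $n-1$ equations $c_i\varphi_i(u) = c_{i+1}\varphi_{i+1}(u)$ and Kirchhoff gives $\sum_{i=1}^n c_i \varphi_i'(u)=0$. Substituting $\varphi_i(u)=\Delta^D_i$ and $\varphi_i'(u)=\Delta^N_i$, this $n\times n$ system has exactly the coefficient matrix displayed on the right-hand side of \eqref{detg}. A block (generalized Laplace) expansion of the full determinant along the group~(i) rows factors $\Delta(\la)$ as a product over $i$ of minors of $A_i$ times the $n\times n$ determinant in \eqref{detg}; the product of minors cancels against the normalization implicit in the definition of the $\Delta^D_i(\la)$ and $\Delta^N_i(\la)$, yielding the claimed identity.

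The main obstacle I anticipate is sign bookkeeping. Proportionality of the two sides of \eqref{detg} is essentially forced: both are entire functions of $\la$ of the same order whose zero sets (counted with multiplicity) coincide with the eigenvalues of $L$, so they agree up to a nonzero constant. But to pin down that constant and obtain \eqref{detg} on the nose, one must fix a coherent ordering of rows and columns in $A_i$, $\Delta^D_i$, $\Delta^N_i$, and the ambient system for $G$, and verify that the sign from the block Laplace expansion exactly matches the sign introduced by identifying $\varphi_i(u)$, $\varphi_i'(u)$ with minors of $A_i$. All other steps (the dimension count, the Cramer-style identification, the final substitution) are routine once the sign conventions have been pinned down.
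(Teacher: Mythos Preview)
Your approach is sound and essentially correct, but it differs from the paper's. The paper does not carry out a block Laplace expansion. Instead it argues as follows: looking at how the columns belonging to $G_i$ enter the full determinant, one sees that $\Delta(\la)=\Delta_i^D(\la)\,D_i(\la)+\Delta_i^N(\la)\,E_i(\la)$ with $D_i,E_i$ depending only on the other subtrees. Since this holds for every $i$, $\Delta$ is multilinear in the pairs $(\Delta_i^D,\Delta_i^N)$ with universal coefficients. Those coefficients are then read off from the simplest instance, the star graph of Example~1, where each $G_i$ is a single edge and $\Delta_i^D=S_i(T_i,\la)$, $\Delta_i^N=S_i'(T_i,\la)$; the star formula is already written as the determinant in \eqref{detg}, so the general case follows by substitution. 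This ``reduce to the star and substitute back'' argument is shorter and, crucially, bypasses the sign bookkeeping you flagged: the signs are fixed once and for all in the explicit star computation.

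Your route --- build the block matrix $\mathrm{diag}(A_1,\dots,A_n)$ with the $n$ coupling rows appended, identify $\varphi_i(u),\varphi_i'(u)$ with $\Delta_i^D,\Delta_i^N$ via Cramer, and reassemble --- is more hands-on and gives a clearer picture of the block structure, but your last paragraph is slightly off. The generalized Laplace expansion along the group~(i) rows does not produce a single product of minors times the $n\times n$ determinant; it produces a \emph{sum} over choices $(j_1,\dots,j_n)$ (one surviving column per block) of $\pm\prod_i m_i^{(j_i)}$ times an $n\times n$ minor of the coupling rows. What makes the argument work is that this sum is precisely the multilinear expansion of the right-hand side of \eqref{detg} once you insert the Laplace expansions of $\Delta_i^D$ and $\Delta_i^N$ along their appended rows. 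So nothing ``cancels''; rather, the two sums match term by term up to a global sign determined by the row/column ordering. If you rewrite that sentence accordingly, your proof goes through.
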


Indeed, if we write the determinant for $\Delta(\la)$ and analyze the participation of the edges 
of $G_i$ in this determinant, we can easily see that 
$\Delta(\la) = \Delta_i^D(\la) D_i(\la) + \Delta_i^N(\la) E_i(\la)$, where the functions $D_i(\la)$ and $E_i(\la)$
do not depend on the subtree $G_i$. Thus we can consider the simplest case of the star-type graph, when each $G_i$ contains only one edge,
and then change the multipliers, corresponding to subgraphs $G_i$, to $\Delta_i^D(\la)$ and $\Delta_i^N(\la)$. 
Thus we directly obtain \eqref{detg} from the formula for the star-type graph.

Lemma~\ref{lem:det} follows from Lemma~\ref{detg} for the graph in the fig.~\ref{img:1}.
Alternatively, one can derive \eqref{relDelta} from \eqref{systemex}, changing characteristic functions for one-edge subtrees by
general characteristic function.

\medskip

{\bf 4.2. Proof of Lemma~\ref{lem:asymptABC}.}
Together with $L$ consider the boundary value problem $L^0$ for equation \eqref{eqv} with $q \equiv 0$, the matching conditions \eqref{MC}
and the boundary conditions $BC$. If some symbol $\ga$ denotes the object related to $L$, we denote by the symbol $\ga^0$ the
similar object related to $L^0$. In particular, $\Delta^0(\la)$ is the characteristic function of $L^0$.
Let the symbol $P(\rho)$ stand for different polynomials of $\sin \rho T_j$ and $\cos \rho T_j$, $j = \overline{1, m}$.

\begin{lem} \label{lem:asymptDelta}
The characteristic function $\Delta(\la)$ has the following asymptotic behavior:
$$
 	\Delta(\la) = \Delta^0(\la) + O(\rho^{-d}) = \frac{P(\rho)}{\rho^{d-1}} + O(\rho^{-d}), \quad \rho \in S_{\de}, \, |\rho| \to \iy,
$$
where $P(\rho) \not \equiv 0$
and $d = m - i - n$, where $m$ is the number of the edges, $i$ is the number of internal vertices and $n$ is the number of boundary vertices with 
the Neumann boundary condition $y'(v) = 0$.
\end{lem}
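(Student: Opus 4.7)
The plan is to read off the asymptotics directly from a determinant representation of $\Delta(\la)$, using the classical estimates for $C_j, S_j, C_j', S_j'$ in the strip $S_\de$. Either the $(2m-b)\times (2m-b)$ linear system described at the start of Section~4.1 or the recursive form of Lemma~\ref{lem:detg} is convenient; I will use the former, since then every nonzero entry is explicitly one of $\pm 1, C_j(T_j,\la), S_j(T_j,\la), C_j'(T_j,\la), S_j'(T_j,\la)$.

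First, I would recall the uniform asymptotics in $S_\de$:
$$
C_j(T_j,\la)=\cos\rho T_j+O(\rho^{-1}), \quad S_j(T_j,\la)=\rho^{-1}\sin\rho T_j+O(\rho^{-2}),
$$
$$
C_j'(T_j,\la)=-\rho\sin\rho T_j+O(1), \quad S_j'(T_j,\la)=\cos\rho T_j+O(\rho^{-1}).
$$
Each entry equals its leading trigonometric term plus an error a factor $\rho^{-1}$ smaller. Expanding the determinant as a signed sum of row-products and replacing every entry by its leading term produces $\Delta^0(\la)$ (the analogous expression for $q\equiv 0$, which is exact since then $C_j=\cos\rho T_j$ and $S_j=\rho^{-1}\sin\rho T_j$), while any product in which at least one entry is replaced by its remainder picks up an extra factor $\rho^{-1}$ compared with the corresponding leading product. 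This yields $\Delta(\la)=\Delta^0(\la)+O(\rho^{-d})$, with $d$ determined by the largest (least negative) order among the row-products of leading terms.

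The exponent $d$ itself comes out of straightforward bookkeeping: each continuity row at an internal vertex contributes a factor of order $\rho^{-1}$ or $1$ (from $S_j$ or $C_j$), each Kirchhoff row contributes a factor of order $1$ or $\rho$ (from $S_j'$ or $C_j'$), and each Dirichlet or Neumann boundary condition eliminates one of the variables $M_j^0, M_j^1$. Combined with the identity $\sum_v \deg(v)=2m$, this produces the stated formula in terms of $m$, $i$, $n$, and clearing denominators exhibits $\rho^{d-1}\Delta^0(\la)$ as a polynomial $P(\rho)$ in $\sin\rho T_j$ and $\cos\rho T_j$.

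The main obstacle will be showing $P(\rho)\not\equiv 0$, equivalently $\Delta^0(\la)\not\equiv 0$. The cleanest argument is via self-adjointness: the unperturbed problem $L^0$ is a self-adjoint boundary value problem on the compact tree, so its spectrum is a discrete real set and its characteristic function is therefore a nontrivial entire function of $\la$. Hence $P(\rho)$ is a nontrivial trigonometric polynomial, as required.
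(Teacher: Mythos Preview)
Your approach is essentially the same as the paper's: both arguments substitute the standard asymptotics for $C_j, S_j, C_j', S_j'$ into the determinant form of $\Delta(\la)$ and read off the leading term $\Delta^0(\la)=P(\rho)/\rho^{d-1}$ with remainder $O(\rho^{-d})$. The only difference is in justifying $P(\rho)\not\equiv 0$: the paper simply invokes the regularity of the standard matching conditions, whereas you argue via self-adjointness of $L^0$ and discreteness of its spectrum, which is an equally valid (and arguably more explicit) route to the same conclusion.
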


\begin{proof}
The claim of the lemma immediately follows from the standard asymptotic formulas
$$
 	C_j(x_j, \la) = \cos \rho x_j + O(\rho^{-1}), \quad C'_j(x_j, \la) = -\rho \sin \rho x_j + O(1),
$$
$$
 	S_j(x_j, \la) = \frac{\sin \rho x_j}{\rho} + O(\rho^{-2}), \quad S'_j(x, \la) = \cos \rho x_j, \quad \rho \in S_{\de}, \, |\rho|\to \iy,
$$
and the construction of $\Delta(\la)$. The relation $P(\rho) \not \equiv 0$ follows from the regularity of the standard matching conditions.
\end{proof}

Applying Lemma~\ref{lem:asymptDelta} to the characteristic functions, defined on Step~3 of the algorithm, we derive
asymptotic representations for the coefficients $c = a_{ij}, b_{ij}, A, B, C$ in the following form:
$$
 	c(\la) = c^0(\la) + O(\rho^{-d}) = \frac{P(\rho)}{\rho^{d-1}} + O(\rho^{-d}), \quad \rho \in S_{\de}, \, |\rho| \to \iy,
$$
where $d$ stands for different integers. This relation yields Lemma~\ref{lem:asymptABC}.

\medskip

{\bf 4.3. Proof of Lemma~\ref{lem:discr}.}
In this subsection, we consider only the problem $L^0$ with $q \equiv 0$, so we omit the index $0$ for brevity.
For simplicity, let $T_f = 1$.
Taking into account, that
$$
	\Delta_3^{DD} = \frac{\sin \rho}{\rho}, \quad \Delta_3^{ND} = \Delta_3^{DN} = \cos \rho, \quad
	\Delta_3^{NN} = -\rho \sin \rho
$$
and doing some algebra with the expressions \eqref{relDelta}, \eqref{defb}, \eqref{defABC}, we derive
\begin{align} \label{formA}
 	A(\la) & = -F_1(\la) F_4(\la) \Delta_0(\la) \frac{\sin^2\rho}{\rho^2} \Delta_4^{DD}(\la) \Delta_5^D(\la) \chi(\la), \\ \label{formB}
  	B(\la) & = -F_1(\la) F_4(\la) \frac{\sin \rho}{\rho} \Delta_0(\la) \left\{ \Delta_5^D(\la) \Pi(\la) + 
  	\Delta_5^D(\la) \frac{\sin \rho}{\rho} \xi(\la) - \Delta_4^{DD}(\la) \Delta_5^N(\la) \frac{\sin \rho}{\rho} \chi(\la)  \right\}, \\ \label{formC}
  	C(\la) & = F_1(\la) F_4(\la) \Delta_0(\la) \frac{\sin \rho}{\rho} \Delta_5^N(\la) \left\{ \Pi(\la) + \frac{\sin \rho}{\rho} \xi(\la) \right\},
\end{align}
where
$$
 	F_i(\la) = \Delta^{DD}_i(\la) \Delta^{NN}_i(\la) - \Delta^{DN}_i(\la) \Delta^{ND}_i(\la), \quad i = 1, 4, 
$$
$$
 	\Pi(\la) = 2 \Delta_1^{DD}(\la) \Delta_2^{DD}(\la) \Delta_4^{DD}(\la), 
$$
$\chi(\la)$ and $\xi(\la)$ are characteristic functions of the graphs $G_1 \cup G_2 \cup G_3$ and 
$G_1 \cup G_2 \cup G_3 \cup G_4$, respectively. Here we mean that the copies of the vertex $v_3$ (and $v_6$ in the second graph)
are joined into one vertex with the standard matching conditions \eqref{MC}.

\begin{lem} \label{lem:ND}
Let $v_1$ and $v_2$ be two fixed vertices from $\partial G$. Denote by $\Delta^{DD}(\la)$, $\Delta^{DN}(\la)$,
$\Delta^{ND}(\la)$ and $\Delta^{NN}(\la)$ the characteristic functions for equation \eqref{eqv} on the tree $G$
with the matching conditions \eqref{MC}, with the following boundary conditions:
$$
    \begin{array}{ll}
       \Delta^{DD}(\la) \colon & \quad y(v_1) = y(v_2) = 0, \\
       \Delta^{DN}(\la) \colon & \quad y(v_1) = y'(v_2) = 0, \\
       \Delta^{ND}(\la) \colon & \quad y'(v_1) = y(v_2) = 0, \\
       \Delta^{NN}(\la) \colon & \quad y'(v_1) = y'(v_2) = 0,
    \end{array}
$$
and with the conditions $BC$ in the vertices $v \in \partial G \backslash \{ v_1, v_2 \}$. Then
\begin{equation} \label{ND}
\Delta^{DD}(\la) \Delta^{NN}(\la) - \Delta^{DN}(\la) \Delta^{ND}(\la) \not \equiv 0.
\end{equation}
\end{lem}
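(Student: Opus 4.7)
Plan: I will express the LHS of \eqref{ND} as $c(\la)^2$ times a product of Wronskians at $v_1$ and $v_2$ via a Pl\"ucker-type identity, and then invoke a conservation law derived from the Kirchhoff condition to show this product is nonzero.

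For $\la$ outside a discrete exceptional set, the space of solutions of \eqref{eqv} on $G$ satisfying the matching conditions \eqref{MC} at internal vertices and the boundary conditions $BC$ at $v \in \partial G \setminus \{v_1, v_2\}$ is $2$-dimensional. Fix a basis $\{y_1, y_2\}$ of this space and form the $4 \times 2$ matrix
\[
  N(\la) = \begin{pmatrix} y_1(v_1) & y_2(v_1) \\ y_1'(v_1) & y_2'(v_1) \\ y_1(v_2) & y_2(v_2) \\ y_1'(v_2) & y_2'(v_2) \end{pmatrix}.
\]
A Laplace expansion of the $2m \times 2m$ linear system matrix that defines each characteristic function (with unknowns $y_j(0), y_j'(0)$ for $j = \overline{1, m}$) yields the representations
\[
  \Delta^{DD} = c(\la)\, p_{13}, \qquad \Delta^{DN} = c(\la)\, p_{14}, \qquad \Delta^{ND} = c(\la)\, p_{23}, \qquad \Delta^{NN} = c(\la)\, p_{24},
\]
where $p_{ij}$ is the $2 \times 2$ minor of $N(\la)$ on rows $i, j$, and $c(\la)$ is a common cofactor depending only on the rows of the system associated with \eqref{MC} and with $BC$ at $\partial G \setminus \{v_1, v_2\}$.

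The Pl\"ucker relation $p_{13} p_{24} - p_{14} p_{23} = p_{12} p_{34}$ combined with the above yields
\[
  \Delta^{DD} \Delta^{NN} - \Delta^{DN} \Delta^{ND} = c(\la)^2\, p_{12} p_{34} = c(\la)^2\, W(y_1, y_2)|_{v_1}\, W(y_1, y_2)|_{v_2},
\]
where $W(y_1, y_2)|_{v_k} := y_1(v_k) y_2'(v_k) - y_1'(v_k) y_2(v_k)$ uses the paper's derivative convention at $v_k$. Normalize the basis by $(y_1(v_1), y_1'(v_1)) = (1, 0)$ and $(y_2(v_1), y_2'(v_1)) = (0, 1)$, so that $W(y_1, y_2)|_{v_1} = 1$. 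For the Wronskian at $v_2$, I use a conservation argument: on each edge $e_j$ the standard Wronskian $W_j = y_{1,j} y_{2,j}' - y_{1,j}' y_{2,j}$ is a constant, and the sign rule $y_j'(v) = -y_j'(T_j)$ at the $T_j$-end yields the edge identity $W(y_1, y_2)|_{v_{j, 0}} + W(y_1, y_2)|_{v_{j, T}} = W_j + (-W_j) = 0$. Summing over all edge-vertex incidences and regrouping by vertex, $\sum_{v \in V} \sum_{e_j \in E_v} W(y_1, y_2)|_v = 0$. At each internal $w$, continuity and Kirchhoff yield
\[
 \sum_{e_j \in E_w} W(y_1, y_2)|_w = y_1(w) \sum_{e_j \in E_w} y_{2, j}'(w) - y_2(w) \sum_{e_j \in E_w} y_{1, j}'(w) = 0,
\]
while at each boundary $v \in \partial G \setminus \{v_1, v_2\}$ the condition $BC$ (Dirichlet or Neumann) forces $W(y_1, y_2)|_v = 0$. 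Hence $W(y_1, y_2)|_{v_1} + W(y_1, y_2)|_{v_2} = 0$, so $W(y_1, y_2)|_{v_2} = -1$.

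Combining, $\Delta^{DD} \Delta^{NN} - \Delta^{DN} \Delta^{ND} = -c(\la)^2$, which is not identically zero: indeed, $c \not\equiv 0$, for otherwise every $\Delta^{**}$ would vanish identically, contradicting Lemma~\ref{lem:asymptDelta}. The main obstacle is the common-cofactor decomposition $\Delta^{**} = c(\la)\, p_{ij}$ with the \emph{same} cofactor $c$ for all four characteristic functions; this requires carefully arranging each $2m \times 2m$ system matrix so that the $2m - 2$ ``non-selected'' rows contribute an identical Laplace cofactor independently of which two rows corresponding to $v_1, v_2$ are chosen.
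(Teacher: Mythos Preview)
Your argument is correct and takes a genuinely different route from the paper's. The paper proceeds by structural induction on the tree: the single-edge case is checked directly, and in the inductive step Lemma~\ref{lem:detg} is used to peel off the boundary edges $e_1, e_2$ incident to $v_1, v_2$, yielding that the combination in \eqref{ND} for $G$ equals the same combination for an inner subtree $G_0$ multiplied by the nonzero factor $(\Delta_1^{\Pi}\Delta_2^{\Pi})^2$. Your approach instead casts the four $\Delta^{**}$ as $c(\la)$ times Pl\"ucker coordinates $p_{ij}$ of a $4\times 2$ boundary-data matrix, invokes the quadratic Pl\"ucker relation $p_{13}p_{24}-p_{14}p_{23}=p_{12}p_{34}$, and then uses a Wronskian-flux conservation (continuity plus Kirchhoff annihilate the contribution at each internal vertex, and $BC$ annihilates it at the remaining boundary vertices) to obtain $p_{34}=-p_{12}$. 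This is more conceptual, works uniformly in $q$, and in fact extends verbatim to graphs with cycles, since neither the Pl\"ucker step nor the conservation law uses the tree hypothesis; the paper's induction, by contrast, is tied to the tree structure but produces explicit factorizations of \eqref{ND} in terms of subtree characteristic functions, which feeds into the asymptotic analysis elsewhere in Section~4.3.

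The common-cofactor decomposition you flag is indeed the only point requiring care, but it is standard: for a full-rank $(2m{-}2)\times 2m$ block $A$ with kernel basis $K$, both $B\mapsto\det\!\left(\begin{smallmatrix}A\\ B\end{smallmatrix}\right)$ and $B\mapsto\det(BK)$ are alternating bilinear in the two rows of $B$ and vanish whenever a row of $B$ lies in the row space of $A$; hence they live in the same one-dimensional space $\wedge^2(\mathbb{R}^{2m}/\mathrm{row}(A))^*$ and agree up to a scalar $c$ depending only on $(A,K)$. Equivalently, $c\,p_{ij}=\det\!\left(\begin{smallmatrix}A\\ r_i\\ r_j\end{smallmatrix}\right)$ is itself an entire characteristic determinant, so no meromorphic subtleties arise.
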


\begin{proof}
We shall divide the proof into the following steps.
1. Let the tree $G$ consists of the only edge $[v_1, v_2]$. Then one can check the relation \eqref{ND} by direct calculation.

2. Let the vertices $v_1$ and $v_2$ be connected by edges with the same vertex $v$, and let there also be 
subtrees $G_i$, $i = \overline{1, n}$, from the vertex $v$ (see fig.~\ref{img:3}). 
Denote by $\Delta^D_i(\la)$ and $\Delta^N_i(\la)$ the characteristic functions for $G_i$ with 
the matching conditions \eqref{MC}, the boundary conditions $BC$ and $y(v) = 0$ for $\Delta^D_i(\la)$
and $y'(v) = 0$ for $\Delta^N_i(\la)$. According to Lemma~\ref{lem:detg}, the following relation holds
$$
 	\Delta^{DD}(\la) = \frac{\sin \rho T_1 \sin \rho T_2}{\rho^2} \Delta^K(\la) + \frac{1}{\rho}
 	(\sin \rho T_1 \cos \rho T_2 + \cos \rho T_1 \sin \rho T_2) \Delta^{\Pi}(\la), 
$$
where
$$
 	\Delta^{\Pi}(\la) = \prod_{i = 1}^n \Delta_i^D(\la), \quad \Delta^K(\la) = \Delta^{\Pi}(\la) \sum_{i = 1}^n \frac{\Delta_i^N(\la)}{\Delta_i^D(\la)}.
$$
Using similar representations for $\Delta^{NN}(\la)$, $\Delta^{DN}(\la)$ and $\Delta^{ND}(\la)$, we derive
$$
 \Delta^{DD}(\la) \Delta^{NN}(\la) - \Delta^{DN}(\la) \Delta^{ND}(\la) = -\left(\Delta^{\Pi}(\la)\right)^2 \not \equiv 0.	
$$

\begin{figure}[h!]
\centering
\begin{tikzpicture}
\filldraw (3, 3) circle (2pt) node[anchor=west]{$v$};
\filldraw (1, 3) circle (2pt) node[anchor=east]{$v_1$};
\filldraw (4.5, 4.5) circle (2pt) node[anchor=west]{$v_2$};
\draw[thick] (1, 3) edge node[above]{$e_1$} (3, 3);
\draw[thick] (4.5, 4.5) edge node[below]{$e_2$} (3, 3);
\draw[dashed] (3, 5) edge [bend left] (3, 3);
\draw[dashed] (1.5, 4.5) edge [bend right] (3, 3);
\draw[dashed] (3, 5) arc (20:200:0.8);
\filldraw (2, 5) circle (1pt);
\draw (2, 5) edge node[auto]{$G_1$}(3, 3);
\draw[dashed] (0, 1) edge [bend left] (3, 3);
\draw[dashed] (1, 0) edge [bend right] (3, 3);
\draw[dashed] (0, 1) arc (160:290:0.75);
\draw (3, 3) edge (1.5, 1.5);
\draw (1.5, 1.5) edge (0.5, 1);
\draw (1.5, 1.5) edge node[auto]{$G_2$} (1.5, 0.5);
\filldraw (1.5, 1.5) circle (1pt);
\filldraw (0.5, 1) circle (1pt);
\filldraw (1.5, 0.5) circle (1pt);
\draw[dashed] (6, 1) edge [bend right] (3, 3);
\draw[dashed] (5, 0) edge [bend left] (3, 3);
\draw[dashed] (5, 0) arc (-110:20:0.75);
\draw (3, 3) edge (4.5, 1.5);
\draw (4.5, 1.5) edge (5.5, 1);
\draw (4.5, 1.5) edge node[auto]{$G_3$} (4.5, 0.5);
\filldraw (4.5, 1.5) circle (1pt);
\filldraw (5.5, 1) circle (1pt);
\filldraw (4.5, 0.5) circle (1pt);
\end{tikzpicture}
\caption{}
\label{img:3}
\end{figure}
\begin{figure}[h!]
\centering
\begin{tikzpicture}
\filldraw (5, 8) circle (2pt) node[anchor=west]{$v_1$};
\filldraw (5, 6) circle (2pt) node[anchor=west]{$v_3$};
\filldraw (5, 0) circle (2pt) node[anchor=west]{$v_2$};
\filldraw (5, 2) circle (2pt) node[anchor=west]{$v_4$};
\draw[thick] (5, 8) edge node[right]{$e_1$} (5, 6);
\draw[thick] (5, 2) edge node[right]{$e_2$} (5, 0);
\filldraw (5, 4) circle (1pt);
\draw (5, 4) edge (5, 6);
\draw (5, 4) edge (5, 2);
\draw[dashed] (5, 4) circle (2);
\draw (5, 4) edge (6.5, 5);
\draw (5, 4) edge (6.5, 3);
\filldraw (6.5, 5) circle (1pt);
\filldraw (6.5, 3) circle (1pt);
\draw (5, 4) edge node[auto]{$G_0$}(4, 4);
\draw (4, 4) edge (3.5, 3);
\draw (4, 4) edge (3.5, 5);
\filldraw (4, 4) circle (1pt);
\filldraw (3.5, 3) circle (1pt);
\filldraw (3.5, 5) circle (1pt);
\draw[dashed] (5, 6) edge [bend left] (3.3, 7.3);
\draw[dashed] (5, 6) edge [bend right] (3.8, 7.8);
\draw[dashed] (3.3, 7.3) edge [bend left] (3.8, 7.8);
\draw (3.7, 7.3) node{$G_1$};
\draw (5, 6) edge (4, 7);
\filldraw (4, 7) circle (1pt);
\draw[dashed] (5, 6) edge [bend right] (6.7, 7.3);
\draw[dashed] (5, 6) edge [bend left] (6.2, 7.8);
\draw[dashed] (6.7, 7.3) edge [bend right] (6.2, 7.8);
\draw (6.3, 7.3) node{$G_2$};
\draw (5, 6) edge (6, 7);
\filldraw (6, 7) circle (1pt);
\draw[dashed] (5, 2) edge [bend right] (3.3, 0.7);
\draw[dashed] (5, 2) edge [bend left] (3.8, 0.2);
\draw[dashed] (3.3, 0.7) edge [bend right] (3.8, 0.2);
\draw (3.7, 0.7) node{$\tilde G_1$};
\draw (5, 2) edge (4, 1);
\filldraw (4, 1) circle (1pt);
\draw[dashed] (5, 2) edge [bend left] (6.7, 0.7);
\draw[dashed] (5, 2) edge [bend right] (6.2, 0.2);
\draw[dashed] (6.7, 0.7) edge [bend left] (6.2, 0.2);
\draw (6.3, 0.7) node{$\tilde G_2$};
\draw (5, 2) edge (6, 1);
\filldraw (6, 1) circle (1pt);
\end{tikzpicture}
\caption{}
\label{img:4}
\end{figure}

3. Now let the vertices $v_1$ and $v_2$ be connected by the edges with $v_3$ and $v_4$, respectively.
Let the tree $G$ splits by the vertices $v_3$ and $v_4$ into the subtrees $G_i$, $i = \overline{1, n_1}$, connected with $v_3$,
the subtrees $\tilde G_j$, $j = \overline{1, n_2}$, connected with $v_4$, the subtree $G_0$, including the both vertices $v_3$ and $v_4$,
and the edges $e_1$, $e_2$ (see fig.~\ref{img:4}). 
Denote by $\Delta^D_i(\la)$, $\Delta^N_i(\la)$, $i = \overline{1, n_1}$, and by 
$\tilde \Delta^D_j(\la)$, $\tilde \Delta^N_j(\la)$, $j = \overline{1, n_2}$, the characteristic functions for the subtrees
$G_i$ with the Dirichlet or Neumann boundary condition in $v_3$ and for the subtrees $\tilde G_i$
with the Dirichlet or Neumann boundary condition in $v_4$, respectively.
Let $\Delta_0^{DD}(\la)$, $\Delta_0^{DN}(\la)$, $\Delta_0^{ND}(\la)$ and $\Delta_0^{NN}(\la)$
be characteristic functions for the subtree $G_0$ with the following boundary conditions 
$$
    \begin{array}{ll}
       \Delta_0^{DD}(\la) \colon & \quad y(v_3) = y(v_4) = 0, \\
       \Delta_0^{DN}(\la) \colon & \quad y(v_3) = y'(v_4) = 0, \\
       \Delta_0^{ND}(\la) \colon & \quad y'(v_3) = y(v_4) = 0, \\
       \Delta_0^{NN}(\la) \colon & \quad y'(v_3) = y'(v_4) = 0,
    \end{array}
$$
and the conditions $BC$ in other boundary vertices.
Denote the functions
$$
 	\Delta_1^{\Pi}(\la) = \prod_{i = 1}^{n_1} \Delta_i^D(\la), \quad \Delta_2^{\Pi}(\la) = \prod_{j = 1}^{n_2} \tilde \Delta_j^D(\la), 
$$
$$
  	\Delta_1^K(\la) = \Delta_1^{\Pi}(\la) \sum_{i = 1}^n \frac{\Delta_i^N(\la)}{\Delta_i^D(\la)}, \quad
  	\Delta_2^K(\la) = \Delta_2^{\Pi}(\la) \sum_{j = 1}^n \frac{\tilde \Delta_j^N(\la)}{\tilde \Delta_j^D(\la)}.
$$
\begin{equation}   \label{defKP}
\left.
\begin{array}{ll}
 	\Delta^{KK}(\la) = & \Delta_0^{DD}(\la) \Delta_1^{K}(\la) \Delta_2^K(\la) + \Delta_0^{ND}(\la) \Delta_1^{\Pi}(\la) \Delta_2^{K}(\la) \\
 	 & + \Delta_0^{DN}(\la) \Delta_1^{K}(\la) \Delta_2^{\Pi}(\la) + \Delta_0^{NN}(\la) \Delta_1^{\Pi}(\la) \Delta_2^{\Pi}(\la), \\ 
  	\Delta^{\Pi K}(\la) = & \Delta_0^{DD}(\la) \Delta_1^{\Pi}(\la) \Delta_2^K(\la) + \Delta_0^{DN}(\la) \Delta_1^{\Pi}(\la) \Delta_2^{\Pi}(\la), \\
  	\Delta^{K \Pi}(\la) = & \Delta_0^{DD}(\la) \Delta_1^{K}(\la) \Delta_2^{\Pi}(\la) + \Delta_0^{ND}(\la) \Delta_1^{\Pi}(\la) \Delta_2^{\Pi}(\la),\\ 
  	\Delta^{\Pi \Pi}(\la) = & \Delta_0^{DD}(\la) \Delta_1^{\Pi}(\la) \Delta_2^{\Pi}(\la).
\end{array} \right\}
\end{equation}
In view of Lemma~\ref{lem:detg}, the following relation holds
\begin{multline*}
 	\Delta^{DD}(\la) = \frac{\sin \rho T_1 \sin \rho T_2}{\rho^2} \Delta^{KK}(\la) + 
 	\frac{\cos \rho T_1 \sin \rho T_2}{\rho} \Delta^{\Pi K}(\la) \\ +
 	\frac{\sin \rho T_1 \cos \rho T_2}{\rho} \Delta^{K \Pi}(\la) + 
 	\cos \rho T_1 \cos \rho T_2 \Delta^{\Pi \Pi}(\la).
\end{multline*}
Together with the similar relations for $\Delta^{DN}(\la)$, $\Delta^{ND}(\la)$ and $\Delta^{NN}(\la)$, it yields
$$
  	\Delta^{DD}(\la) \Delta^{NN}(\la) - \Delta^{DN}(\la) \Delta^{ND}(\la) = 
  	\Delta^{\Pi \Pi}(\la) \Delta^{KK}(\la) - \Delta^{\Pi K}(\la) \Delta^{K \Pi}(\la)
$$
Taking \eqref{defKP} into account, we obtain
$$
  	\Delta^{\Pi \Pi}(\la) \Delta^{KK}(\la) - \Delta^{\Pi K}(\la) \Delta^{K \Pi}(\la) = 
  	(\Delta_0^{DD}(\la) \Delta_0^{NN}(\la) - \Delta_0^{DN}(\la) \Delta_0^{ND}(\la)) \left( \Delta_1^{\Pi}(\la) \Delta_2^{\Pi}(\la) \right)^2.
$$
By virtue of Lemma~\ref{lem:asymptDelta}, $\Delta_i^{\Pi}(\la) \not \equiv 0$, $i = 1, 2$. Therefore the relation
\eqref{ND} holds for the tree $G$ if and only if it holds for the subtree $G_0$.
By induction, the claim of the lemma is valid for any tree $G$.
\end{proof}

By virtue of Lemmas~\ref{lem:asymptDelta}, \ref{lem:ND} and \eqref{formA}, $A(\la) \not \equiv 0$.
It follows from \eqref{formA}, \eqref{formB}, \eqref{formC}, that
\begin{multline*}
 	D(\la) = B^2(\la) - 4 A(\la) C(\la) \\ = F_1^2(\la) F_4^2(\la) \frac{\sin^2 \rho}{\rho^2} \Delta_0(\la) \left\{ \Delta_5^D(\la) \Pi(\la) + 
  	\Delta_5^D(\la) \frac{\sin \rho}{\rho} \xi(\la) + \Delta_4^{DD}(\la) \Delta_5^N(\la) \frac{\sin \rho}{\rho} \chi(\la)  \right\}^2.
\end{multline*}
Note that the expression in the bracket above equals to
$$
   \Delta_5^D(\la) \Pi(\la) + \frac{\sin \rho}{\rho} \Delta_0(\la). 
$$
Similarly to Lemma~\ref{lem:asymptDelta}, the following asymptotic formulas can be obtained:
$$
 	\Delta_5^D(\la) \Pi(\la) = C_1 r^{-p} \exp(r (T - 1))[1], \quad  
 	\frac{\sin \rho}{\rho} \Delta_0(\la) = C_2 r^{-q} \exp(r (T + 1))[1], 
$$
where $\rho = i r$, $r \to +\iy$, $T = \sum\limits_{j = 1}^m T_j$,
$C_1$, $C_2$, $p$ and $q$ are some constants. Clearly, the second term grows faster than the first one.
Therefore $\Delta_0(\la) \not \equiv 0$ implies $D(\la) \not \equiv 0$. The proof of Lemma~\ref{lem:discr} is finished.

Using Lemma~\ref{lem:asymptDelta}, one can also check, that $B(\la)$ and $\sqrt{D(\la)}$ have the same power of $\rho$ 
in the denominator, so the roots of \eqref{quad} have different asymptotic behavior.

\bigskip

{\large \bf 5. Example}

\medskip

In this section, we provide the solution of Inverse Problem~\ref{ip:main}
for the example of the graph in the fig.~\ref{img:2}. For simplicity, let $T_j = 1$, $j = \overline{1, 5}$.
 Let $x_3 = 0$ corresponds to the vertex $v_3$ and
$x_3 = 1$ corresponds to $v_6$. For the boundary edges, $x_j = 0$ correspond to the boundary vertices.
The matching conditions \eqref{MC} take the form
\begin{equation} \label{MCex}
\begin{array}{l}
v_3 \colon \quad y_1(1) = y_2(1) = y_3(0), \quad y'_1(1) + y'_2(1) - y'_3(0) = 0, \\
v_6 \colon \quad y_3(1) = y_4(1) = y_5(1), \quad y'_3(1) + y'_4(1) + y'_5(1) = 0.
\end{array}
\end{equation}

\begin{figure}[h!]
\centering
\begin{tikzpicture}
\filldraw (2, 2) circle (2pt) node[anchor=west]{$v_6$};
\filldraw (2, 4) circle (2pt) node[anchor=west]{$v_3$};
\filldraw (0, 0) circle (2pt) node[anchor=north]{$v_4$};
\filldraw (4, 0) circle (2pt) node[anchor=north]{$v_5$};
\filldraw (0, 6) circle (2pt) node[anchor=south]{$v_1$};
\filldraw (4, 6) circle (2pt) node[anchor=south]{$v_2$};
\draw[thick] (2, 2) edge node[auto]{$e_3$} (2, 4);
\draw[thick] (2, 2) edge node[below]{$e_4$} (0, 0);
\draw[thick] (2, 2) edge node[below]{$e_5$} (4, 0);
\draw[thick] (2, 4) edge node[above]{$e_1$} (0, 6);
\draw[thick] (2, 4) edge node[above]{$e_2$} (4, 6);
\end{tikzpicture}
\caption{Example}
\label{img:2}
\end{figure}

For this example, each subtree $G_i$ consists of only one edge $e_i$, $i = \overline{1, 5}$.
Let us know the spectra $\Lambda_0$, $\Lambda_1$, $\Lambda_4$ and the potential $q_3$. Using the given spectra,
one can easily find the characteristic functions $\Delta_0(\la)$, $\Delta_1(\la)$, $\Delta_4(\la)$ and
the Weyl functions $M_1(\la)$, $M_4(\la)$. Solving problems IP(1) and IP(4), recover $q_1$ and $q_3$.  

Consider the boundary value problem $L$. Represent the solution $y$ in the form \eqref{expy} and substitute
it into \eqref{MC} and \eqref{boundL}. From \eqref{boundL}, one gets
$M_1^0(\la) = M_2^0(\la) = M_4^0(\la) = M_5^0(\la) = 0$. Then matching conditions \eqref{MCex}
yield the system
\begin{equation} \label{systemex}
 	\begin{pmatrix}
 	   S_1 & -S_2 & 0 & 0 & 0 & 0 \\
 	   0 & S_2 & -1 & 0 & 0 & 0 \\
 	   S'_1 & S'_2 & 0 & -1 & 0 & 0 \\
 	   0 & 0 & C_3 & S_3 & -S_4 & 0 \\
 	   0 & 0 & 0 & 0 & S_4 & -S_5 \\
 	   0 & 0 & C'_3 & S'_3 & S'_4 & S'_5
 	\end{pmatrix}
 	\begin{pmatrix}
 	   M_1^1 \\ M_2^1 \\ M_3^0 \\ M_3^1 \\ M_4^1 \\ M_5^1
 	\end{pmatrix}
 	= 0.
\end{equation}
Here we omit arguments $(1, \la)$ and $(\la)$ for brevity.
The characteristic function $\Delta_0(\la)$ equals the determinant of \eqref{systemex}. 
Since we know $q_1$, $q_3$ and $q_4$, we can solve \eqref{eqv} and obtain the functions $S_j(x_j, \la)$ 
and $C_j(x_j, \la)$ for $j = 1, 3, 4$. Therefore the determinant admits the following representation
$$
 	\Delta_0 = a_{11} S_2 S_5 + a_{12} S'_2 S_5 + a_{13} S_2 S'_5 + a_{14} S'_2 S'_5,
$$ 
where
$$
	\begin{array}{l}
 	a_{11} = S'_1 \begin{vmatrix} S_3 & -S_4 \\ S'_3 & S'_4 \end{vmatrix} + 
 	S_1 \begin{vmatrix} C_3 & -S_4 \\ C'_3 & S'_4 \end{vmatrix}, \quad
 	a_{12} = S_1 \begin{vmatrix} S_3 & -S_4 \\ S'_3 & S'_4 \end{vmatrix}, \\
 	a_{13} =  (S'_1 S_3 + S_1 C_3) S_4, \quad a_{14} = S_1 S_3 S_4.
 	\end{array}
$$
If one change $S_1$ to $C_1$ or $S_4$ to $C_4$, he obtains analogous relations for $\Delta_1(\la)$ and $\Delta_4(\la)$,
respectively. Thus we arrive at the system \eqref{systemA}. 

Let $q \equiv 0$ on $G$. Then
$$
 	C^0_j(x_j, \la) = \cos \rho x_j, \quad S^0_j(x_j, \la) = \frac{\sin \rho x_j}{\rho},
$$
$$
 	a^0_{11} = \frac{\sin 3 \rho}{\rho}, \quad a^0_{12} = a^0_{13} = \frac{\sin 2 \rho \sin \rho}{\rho^2}, \quad
 	a^0_{14} = \frac{\sin^3 \rho}{\rho^3},
$$
$$
 	a^0_{21} = a^0_{31} = \cos 3 \rho, \quad a^0_{22} = a^0_{33} = \frac{\sin 2 \rho \cos \rho}{\rho}, \quad
$$
$$
 	a^0_{23} = a^0_{32} = \frac{\cos 2 \rho \sin \rho}{\rho}, \quad a^0_{24} = a^0_{34} = \frac{\cos \rho \sin^2 \rho}{\rho^2}.
$$
$$
 	\Delta^0_0 = \frac{- 9 \sin 5 \rho + 13 \sin 3 \rho + 6 \sin \rho}{16 \rho^3}, \quad
 	\Delta^0_1 = \Delta^0_4 = \frac{- 9 \cos 5 \rho + 7 \cos 3 \rho + 2 \cos \rho}{16 \rho^2}.
$$
Using \eqref{defb}, we obtain
$$
 	b^0_{11} = b^0_{21} = \frac{-3 \sin 6 \rho - 2 \sin 4 \rho + 13 \sin 2 \rho}{16 \rho^3}, \quad
 	b^0_{12} = b^0_{23} = \frac{- 3 \cos 6 \rho + 6 \cos 4 \rho + 3 \cos 2 \rho - 6}{16 \rho^4},
$$
$$
	b^0_{13} = b^0_{22} = \frac{3 \cos 6 \rho - 10 \cos 4 \rho + 13 \cos 2 \rho - 6}{32 \rho^4}, \quad
	b^0_{14} = b^0_{24} = \frac{- 3 \sin 6 \rho + 12 \sin 4 \rho - 15 \sin 2 \rho}{32 \rho^5}.
$$
Substitute these formulas into \eqref{defABC}:
$$
 	A_0 = \frac{-27 \sin 12 \rho + 174 \sin 10 \rho - 420 \sin 8 \rho + 378 \sin 6 \rho + 153 \sin 4 \rho - 468 \sin 2 \rho}{2048 \rho^9}, 
$$
$$
 	B_0 = \frac{-27 \cos 12 \rho + 84 \cos 10 \rho + 106 \cos 8 \rho - 764 \cos 6 \rho + 1099 \cos 4 \rho -344 \cos 2 \rho -154}{2048 \rho^8},
$$
$$
  	C_0 = \frac{-27 \sin 12 \rho + 48 \sin 10 \rho + 140 \sin 8 \rho - 336 \sin 6 \rho -71 \sin 4 \rho +512 \sin 2 \rho}{1024 \rho^7}.
$$
Calculate the discriminant of equation \eqref{quad0}:
\begin{multline*} 
 	D_0 = B_0^2 - 4 A_0 C_0 = (6561 \cos 24 \rho - 52488 \cos 22 \rho + 128628 \cos 20 \rho + 83592 \cos 18 \rho \\ -
 	987134 \cos 16 \rho + 1543976 \cos 14 \rho + 702372 \cos 12 \rho -4646312 \cos 10 \rho + 3755087 \cos 8 \rho \\ +
 	3053616 \cos 6 \rho - 4805144 \cos 4 \rho - 4176688 \cos 2 \rho + 5393934) / (8388608 \rho^{16}). 
\end{multline*}
We used wxMaxima 12.04.0 for calculations.

Obviously, $A_0(\la) \ne 0$, $D_0(\la) \ne 0$, so according to Lemma~\ref{lem:asymptABC}, the roots of equation \eqref{quad}
in the general case have different asymptotics:
$$
 	\tilde M_2^1(\la) = \frac{\rho \cos \rho}{\sin \rho}[1], \quad \tilde M_2^2(\la) = -\frac{1 + 6 \cos^2 \rho}{3 \sin \rho \cos \rho}[1].
$$ 
Since $\tilde M_2(\la) = \frac{S'_2(1, \la)}{S_2(1, \la)}$, only the root $M_2^1(\la)$ is the required one.

Finally, one can easily find $\tilde M_5(\la)$ and solve classical Sturm-Liouville inverse problems by Weyl functions
on the edges $e_2$ and $e_5$.

Now let us consider the case when the potential is known a priori on two edges.
If they are $e_1$ and $e_4$, then only two spectra $\Lambda_0$ and $\Lambda_2$ are sufficient to recover the potential on the whole graph. Indeed, one can solve IP(2), then apply Theorem~\ref{thm:cut} to the vertex $v_3$, find $q_3$ and then similarly find $q_5$. However, the knowledge of $q_1$ and $q_2$ do not allow us to recover the potential from two spectra by our method.  If we have only $\Lambda_0$ and $\Lambda_4$, we can not recover $q_3$. Similarly, if we know $q_3$ initially, the knowledge of the potential on one of the boundary edges do not allow us to reduce the number of given spectra. Thus, if the potential is known on multiple edges, the number of required spectra depends on the location of these edges.

\medskip

{\bf Acknowledgments}. This work was supported by Grant 1.1436.2014K
of the Russian Ministry of Education and Science, by Grants 15-01-04864 and 16-01-00015
of Russian Foundation for Basic Research and by the Mathematics Research Promotion Center of Taiwan.

\vspace{1cm}

Natalia Bondarenko

Department of Applied Mathematics

Samara University

Moskovskoye sh. 34, Samara 443086, Russia

Department of Mechanics and Mathematics

Saratov State University

Astrakhanskaya 83, Saratov 410012, Russia

{\it bondarenkonp@info.sgu.ru}

\medskip

Chung-Tsun Shieh

Department of Mathematics

Tamkang University

151 Ying-chuan Road Tamsui, Taipei County, Taiwan 25137, R.O.C.

{\it ctshieh@mail.tku.edu.tw}  

\end{document}